\newtheorem{prop}{Proposition}
\newtheorem{defn}{Definition}
\newtheorem{cor}{Corollary}
\newtheorem{lem}{Lemma}
\newtheorem{thm}{Theorem}
\newtheorem{assumption}{Assumption}
\numberwithin{equation}{section}
\newcommand{\R}{\mathbb{R}}
\newcommand{\de}{\mathrm{d}}
\newcommand{\x}{\bm{x}}
\newcommand{\Conv}{\mathrm{Conv}}
\DeclareMathOperator*{\argmin}{arg\,min}
\title{On Representation Formulas for  Optimal Control: \\
A Lagrangian Perspective\thanks{This work was supported in part by the BK21 FOUR program of the Education and Research Program for Future ICT Pioneers, Seoul National University in 2022,
the Information and Communications Technology Planning and Evaluation (IITP) grant funded by MSIT(2020-0-00857),   the National Research Foundation of Korea funded by MSIT(2020R1C1C1009766), and Samsung Electronics.}}
\author{
Yeoneung Kim\thanks{Department of Electrical and Computer Engineering, ASRI,  Education and Research program for Future ICT Pioneers, 
 Seoul National University, Seoul 08826, South Korea.}
 \thanks{Deparment of Financial Mathematics, Gachon University, Seongnam 13120, South Korea.}
 \and
 Insoon Yang\footnotemark[2]}
\date{}
\begin{document}
\maketitle

\pagestyle{myheadings}
\thispagestyle{plain}

\begin{abstract}
In this paper, we study representation formulas for finite-horizon optimal control problems with or without state constraints, unifying two different viewpoints: the Lagrangian and dynamic programming (DP) frameworks. In a recent work \cite{lee2021computationally}, the generalized Lax formula is obtained via DP  for optimal control problems with state constraints and nonlinear systems. 
We revisit the formula from the Lagrangian perspective to provide a unified framework for understanding and implementing the nontrivial representation of the value function. 
Our simple derivation  makes  direct use of the Lagrangian formula from the theory of Hamilton--Jacobi (HJ) equations. 
We also discuss a rigorous way to construct an optimal control using a $\delta$-net,   as well as a numerical scheme for controller synthesis via convex optimization. 
\end{abstract}


\section{Introduction}

A popular approach to solving and analyzing continuous-time optimal control problems is to use the Hamilton--Jacobi--Bellman (HJB) equation, which is obtained via dynamic programming (DP)~\cite{bellman1966dynamic}.
The theory of HJB equations has been extensively studied, particularly in the viscosity solution framework~\cite{crandall1992user}.
The value function of an optimal control problem corresponds to the unique viscosity solution of an associated HJB equation. 
Various methods have been proposed to numerically solve HJB equations by discretizing the state space~(e.g., \cite{osher1991high, jiang2000weighted, barles2002convergence,forsyth2007numerical}).
However, in general, convergent numerical methods have a scalability issue since the computational complexity increases exponentially with the dimension of the state space.

A notable feature of the viscosity solution  is that it can be expressed using a variational formula if the Hamiltonian is convex.
In particular, when the Hamiltonian depends only on the costate, we have a simple representation of the solution, called the Hopf--Lax formula \cite{hopf1965generalized,bardi1984hopf}. 
More recently, various algorithms using the Hopf--Lax type representation formulas  have been proposed and shown to efficiently solve some classes of high-dimensional HJB equations~\cite{darbon2016algorithms, aliyu2016modified,chow2019algorithm, darbon2020decomposition, lee2020hopf, yegorov2021perspectives}. 
Furthermore, there have been a few attempts to identify explicit solutions to a certain class of optimal control problems using representation formulas related to the Hopf--Lax formula \cite{chen2021lax}. 
Another notable work is \cite{lee2021computationally}, where a generalized Lax formula is obtained via DP to handle both state constraints and nonlinear systems. 
Unfortunately, the control trajectory obtained by this method presents chattering behaviors.

In this paper, we revisit the generalized Lax formula from the Lagrangian perspective to provide a unified view.
We first show that the generalized Lax formula can be directly derived using the Lagrangian framework from the theory of HJ equations. 
For optimal control problems without state constraints, the generalized Lax formula corresponds to the Lagrangian formula under a simple change of variables. 
 To extend to state-constrained problems, we introduce a penalty function that penalizes the Lagrangian outside of a prescribed domain, as in \cite{capuzzo1990hamilton}, which in turn relates viscosity solutions to state-constrained viscosity solutions \cite{soner1986optimal}. 
 Under some structural assumptions, we also discuss a concrete way to construct an optimal control  in the form of a simple function using the notion of $\delta$-net with the optimality gap bounded by $C \delta$ for some positive constant $C$. 
 Another important observation is that the Lagrangian framework provides a natural representation of the value function using the convex conjugate of the Hamiltonian, even if the original optimal control problem is nonconvex. 
 Exploiting this property, we propose a controller synthesis scheme via convex optimization. 
 The results of our numerical experiments show that the proposed scheme significantly reduces oscillations in control trajectories while achieving slightly lower total costs compared to the method proposed in \cite{lee2021computationally}.

This paper is organized as follows. In Section 2, we revisit the generalized Lax formula using the Lagrangian framework from the theory of HJ equations. In Section 3, a theoretically rigorous way to construct   optimal controls is discussed, together with its algorithmic implications. Section 4 presents a controller synthesis scheme and its application to a state-constrained optimal control example.

\section{Optimal Control and the Generalized Lax Formula}

In this section, an equivalent representation of the value function for an optimal control problem is provided from the Lagrangian perspective in the theory of HJ equations. After the connection is established between the generalized Lax formula and the HJ PDE from the Lagrangian viewpoint, we extend the relationship to state-constrained optimal control problems.

Consider a continuous-time dynamical system of the form\footnote{Throughout the paper, $\dot{x}(t)$ denotes $\frac{\de}{\de t}x(t)$.}
\begin{equation}\label{sys}
\begin{split}
\dot{x} (t) &= f(t, x(t), u(t)), \quad 0 \leq t \leq T,
\end{split}
\end{equation}
where $x(t) \in \R^n$ and $u(t) \in U \subset \R^m$ are the system state and control input at time $t$, respectively, 
The set of admissible controls is defined as
\[
\mathcal{U}:=\{u:[0,T] \rightarrow U  :  \text{$u$ is measurable}\},
\]
where the control set $U$ is assumed to be compact.

A finite-horizon optimal control problem can then be formulated as 
\[
\min_{u \in \mathcal{U}} \left \{ \int_0^T L(t,x(t),u(t)) \de t+g(x(T)) : x(0) = \bm{x} \right \},
\]
where $L$ is the stage-wise cost function, and $g$ is the terminal cost function. 
Throughout the paper, we always assume that
\begin{itemize}
    \item $f(t, \x, a)$ and $L(t, \x, a) \geq 0$ are Lipschitz continuous in $t$ and $\x$ for each $a \in \mathcal{U}$,
    \item $g(\x)$ is Lipschitz continuous and uniformly bounded.
\end{itemize}
Let $V(\bm{x},t)$ denote the optimal value function, defined as
\begin{equation}\label{value}
V(\bm{x},t):=\inf_{u \in \mathcal{U}} \left \{  \int_t^T L(s,x(s),u(s)) \de s+g(x(T)) : x(t) = \bm{x} \right \},
\end{equation}
which represents the optimal cost-to-go starting from $x(t) = \bm{x}$ at time $t$. 
Under the assumptions above, it is well known that $V$ is the unique viscosity solution of the following HJ equation~\cite{bardi1997optimal}:\footnote{Throughout the paper, $V_t$ denotes $\frac{\partial}{\partial t}V(\bm{x}, t)$ and $DV$ denotes $\frac{\partial}{\partial \bm{x}} V(\bm{x}, t)$.}
\begin{align*}
\begin{cases}
    -V_t + H(t, \bm{x},DV) = 0 &\quad \text{in} \quad \R^n\times (0,T),\\
    V(\bm{x},T)=g(\bm{x}) &\quad  \text{on} \quad \R^n,
    \end{cases}
\end{align*}
where the Hamiltonian is given by
\[
H(t, \bm{x}, p):=\sup_{a \in U} \left\{ -p\cdot f(t,\bm{x}, a)-L(t, \bm{x}, a) \right\}.
\] 

The following feasible set of negative vector fields plays an important role in the change of variables with respect to the control action.
\begin{defn}
Given $\bm{x} \in \mathbb{R}^n$ and $t \in [0,T]$, we define 
\[
B(\bm{x}, t):=\{-f(t, \bm{x}, a): a \in U\},
\]
 which we call the feasible set of negative vector fields.
\end{defn}

Consider the following change of variables: 
\[
\beta (t) := -f(t, x(t), u(t)) \in B(x(t), t).
\]
Then, the dynamical system~\eqref{sys} and the value function~\eqref{value} can be rewritten as
\[
\dot{x} (t) = -\beta (t), \quad 0 \leq t \leq T,
\]
and
\[
V(\bm{x}, t) = \inf_{\beta \in \mathcal{B}} \left \{  \int_t^T \tilde{L}(s,x(s),\beta(s)) \de s+g(x(T)) : 
x(t) = \bm{x},
\beta(s) \in B(x(s), s)
 \right \},
\]
where
\[
\tilde L(s, \bm{x},b):=\min_{a\in U} \{L(s,\bm{x},a) : f(s,\bm{x},a)=-b\},
\]
and 
$\mathcal{B}:=\{\beta:[0,T] \rightarrow \mathbb{R}^n  :  \text{$\beta$ is measurable}\}$.

To consider the formulation above from the Lagrangian perspective, we will use the convex conjugate $H^*(t, \x, b):=\sup_{p \in \R^n} \left \{ p\cdot b - H(t, \x, p) \right \}$ of the Hamiltonian as the Lagrangian. 
We first identify the domain of $b$ in which $H^*(t, \x, b)$ is well defined. 
Note that 
\[
H(t, \bm{x}, p)= \sup_{b\in B(\bm{x}, t)} \left \{ p\cdot b - \tilde L(t, \bm{x},b) \right \}.
\]
It is clear that $H(t, \bm{x},\cdot)$ is convex as it is the supremum of affine functions in $p$.  
Since $B(\x, t)$ is compact, 
\begin{equation*}
    \sup_{b\in B(\x, t)} p\cdot b- C \leq H(t,\x,p) \leq \sup_{b\in B(\x,t)} p\cdot b+C
\end{equation*}
for some positive constant $C$ depending only on $\bm{x}$ and $t$. It follows from the linearity of $p\cdot b$ with respect to $b$  that
\begin{equation*}
    \sup_{b\in B(\x,t)} p\cdot b = \sup_{b\in \Conv(B(\x,t))} p\cdot b, 
\end{equation*}
where $\Conv (A)$ denotes the convex hull of a set $A$. 
Based on the assertion above, we identify a necessary and sufficient condition in which $H^*$ is well defined. 
\begin{prop}\label{prop:well}
Given $\x \in \mathbb{R}^n$ and $t \in [0,T]$, 
let 
\begin{equation}\label{conj}
H^*(t, \x, b):=\sup_{p \in \R^n} \left \{ p\cdot b - H(t, \x, p) \right \}
\end{equation}
be the convex conjugate of $H(t, \x, \cdot)$. 
 Then, $H^*(t, \x, b)= +\infty$ for $b\not\in \Conv(B(\x,t))$ and $H^*(t, \x, b) <\infty$ for $b\in \Conv(B(\x,t))$.
\end{prop}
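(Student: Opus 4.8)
The plan is to recast everything in terms of the compact convex set $K := \Conv(B(\x, t))$ and its support function $\sigma_K(p) := \sup_{b \in K} p \cdot b$. The excerpt has already done the essential reduction: combining the sandwich bound with the identity $\sup_{b \in B(\x,t)} p\cdot b = \sup_{b \in K} p\cdot b = \sigma_K(p)$ yields the two-sided estimate
\[
\sigma_K(p) - C \le H(t, \x, p) \le \sigma_K(p) + C \qquad \text{for all } p \in \R^n,
\]
with $C = C(\x,t) > 0$. Since the convex conjugate of the support function of a nonempty closed convex set is its indicator function, I expect $H^*(t,\x,\cdot)$ to be squeezed between $I_K - C$ and $I_K + C$, from which both claims follow at once. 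I will argue the two halves directly, rather than invoking conjugate duality wholesale, to keep the proof self-contained.

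For the finiteness claim, fix $b \in K$. By definition of the support function, $p\cdot b \le \sigma_K(p)$ for every $p$, so the lower bound above gives
\[
p\cdot b - H(t,\x,p) \le \sigma_K(p) - \bigl(\sigma_K(p) - C\bigr) = C.
\]
Taking the supremum over $p \in \R^n$ yields $H^*(t,\x,b) \le C < \infty$. (A matching lower bound $H^*(t,\x,b) \ge -C$ comes from evaluating at $p = 0$, since $\sigma_K(0) = 0$, though only finiteness is needed here.)

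For the divergence claim, fix $b \notin K$. Here I will invoke the separating hyperplane theorem: as $K$ is nonempty, compact, and convex and $b \notin K$, there is a direction $q \in \R^n$ strictly separating $b$ from $K$, i.e.\ $\varepsilon := q\cdot b - \sigma_K(q) > 0$. Substituting $p = \lambda q$ with $\lambda > 0$ into the definition of the conjugate and using the upper bound $H(t,\x,\lambda q) \le \sigma_K(\lambda q) + C = \lambda \sigma_K(q) + C$ gives
\[
H^*(t,\x,b) \ge \lambda\, q \cdot b - \lambda \sigma_K(q) - C = \lambda \varepsilon - C \xrightarrow[\lambda \to \infty]{} +\infty,
\]
so $H^*(t,\x,b) = +\infty$.

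The main obstacle is not the algebra but justifying the hypotheses of the separation step: I must confirm that $K = \Conv(B(\x,t))$ is nonempty, compact, and convex, so that strict separation of an exterior point is available (and so that $\sigma_K$ is finite, which is what makes the sandwich bound meaningful in the first place). Nonemptiness and compactness of $B(\x,t)$ follow from $U$ being nonempty and compact together with the continuity of $f(t,\x,\cdot)$; compactness is then preserved under the convex hull in finite dimensions (e.g.\ via Carath\'eodory's theorem), and convexity holds by construction. Once these structural facts are in place, the two estimates above close the argument.
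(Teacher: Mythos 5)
Your proof is correct. The divergence half is essentially the paper's argument: both of you separate $b \notin \Conv(B(\x,t))$ from the compact convex hull by a hyperplane, bound $H(t,\x,p)$ above by the support function plus a constant, and send the dual variable to infinity along the separating direction. Where you genuinely differ is the finiteness half. The paper writes $b = \sum_{i=1}^N \gamma_i b_i$ with $b_i \in B(\x,t)$, pushes the supremum through the convex combination, and uses the pointwise inequality $H(t,\x,p) \geq p\cdot b_i - \tilde L(t,\x,b_i)$ to obtain the explicit bound $H^*(t,\x,b) \leq \sum_i \gamma_i \tilde L(t,\x,b_i)$; you instead use the lower sandwich bound $H(t,\x,p) \geq \sigma_K(p) - C$ to get the uniform bound $H^*(t,\x,b) \leq C$ on all of $K$ in one line. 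Your route is shorter and makes the symmetry with the divergence half transparent (both claims fall out of the two-sided estimate $\sigma_K - C \leq H \leq \sigma_K + C$), but the paper's convex-combination bound is the more informative one: it is precisely the representation $H^*(t,\x,b) = \sum_i \gamma_i \tilde L(t,\x,b_i)$ that the authors invoke later when discussing recovery of admissible controls, so their proof doubles as a derivation of that formula. Your attention to the structural hypotheses (nonemptiness and compactness of $\Conv(B(\x,t))$ via continuity of $f(t,\x,\cdot)$ on the compact set $U$ and Carath\'eodory) is a point the paper glosses over, and is worth retaining.
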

\begin{proof}
Suppose first that $b\not\in \Conv(B(\x,t))$.
Recall that $H(t,\x,p) \leq \sup_{b\in \Conv(B(\x,t))} p\cdot b +C$. Therefore,  we have
\[
H^*(t, \x,b) \geq \sup_{p} \left \{ p\cdot b - \sup_{b' \in \Conv(B(\x,t))} p\cdot b' \right \}-C.
\]
Consider a hyperplane passing through $b$ such that $B( \x, t)$ lies strictly above (or below) of the plane. Denoting the unit normal vector pointing toward $B(\x, t)$ by $e$, we have that
\[
e \cdot(b-b') \geq \delta >0
\]
for all $b' \in B(\x, t)$ and some $\delta>0$. Taking $p=z e$ and letting $z$ tend to $\infty$, we obtain that $H^*(t, \x, b)= +\infty$ when $b\not\in \Conv(B(\x,t))$.

The second part is a straightforward result following from the definition of the Legendre transform. However, we present a proof for the sake of completeness. 
We first write $b=\sum_{i=1}^N \gamma_i b_i \in \Conv(B(\x,t))$ with $b_i \in B(\x, t)$ and $\sum_{i=1}^N \gamma_i =1$, where $\gamma_i$'s are nonnegative.
Then, we deduce that
\begin{align}\label{finite}
    H^*(t,\x,b) & = \sup_p  \left \{ p\cdot b - H(t,\x,p) \right \}\nonumber\\
&\leq \sum_{i=1}^N \gamma_i \sup_p \left \{ p\cdot b_i - H(t, \x,p) \right \}.
\end{align}
Since $H(t, \x, p)=\sup_{b'\in B(\x,t)} \{ p\cdot b' - \tilde L(s,\x,b') \}$, it follows that
\begin{equation*}
    H(t,\x,p) \geq p\cdot b' - \tilde L(t, \x,b')
\end{equation*}
for any $b' \in B(\x,t)$. Choosing $b'=b_i$ for each $i$ and using \eqref{finite}, we conclude that $H^*(t,\x,b) \leq  \sum_{i=1}^N  \gamma_i \tilde L (t, \x, b_i)$, where the right-hand side is finite.
\end{proof}

This proposition implies that the condition $b\in \Conv(B(\x,t))$ is necessary and sufficient for $H^*$ to be well-defined, or finite. 
Thus, if  $H^* (t,\x, b)$ is used as the stage-wise cost function, this condition forces the negative vector field to stay inside of $B(x(t),t)$ so that $\dot x(t)=f(t,x(t),u(t))$. This motivates us to use the Lagrangian formula using $H^*$ as the stage-wise cost function.

\subsection{Representation Formulas for Unconstrained Optimal Control}
Before stating our main observation, let us recall the Lagrangian framework \cite{tran2021hamilton}. It is known that for a Lagrangian $\mathcal{L}(t, \x, b)$ which is convex in $b$, the value function
\begin{equation} \label{lag}
    \phi(x,t):=\inf \left \{\int_0^t \mathcal{L}(s,\xi(s),\dot\xi(s))\de s +g(\xi(0)) : \xi(t)=\bm{x},\dot \xi(s) \in L^1([0,t]) \right \}
\end{equation}
is the  unique viscosity solution of the following HJ PDE:
\begin{align*}
\begin{cases}
\phi_t+H(t, \x,D \phi)=0 &\quad \text{in}\quad  \R^n \times (0,\infty),\\
\phi (\x,0)=g( \x) &\quad  \text{on} \quad  \R^n,
\end{cases}
\end{align*}
where $H(t,\x,p):=\sup_b \{ p\cdot b - \mathcal{L}(t,\x,b) \}$.

Now regarding $H^*$ as the Lagrangian, we obtain the following representation of the value function~\eqref{value}.

\begin{thm}\label{unconstraint}
For $(\bm{x},t) \in \mathbb{R}^n \times [0,T]$,
the value function~\eqref{value} can be represented as
\begin{align}
    V(\x,t)&=\inf \left \{\int_t^T H^*(s,x(s),-\dot x(s)) \de s +g(x(T)): x(t) = \x \right \}\label{lagrange}\\
    &=\inf \left \{\int_t^T H^{*}(s,x(s),\beta(s)) \de s+g(x(T)): x(t) = \x, \beta(s)\in \Conv(B(x(s),s)) \right\}.\label{dual}
\end{align}
\end{thm}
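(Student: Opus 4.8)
The plan is to prove the first equality \eqref{lagrange} by recognizing its right-hand side as a Lagrangian value function of the type \eqref{lag} and then invoking uniqueness of viscosity solutions; the second equality \eqref{dual} will follow almost immediately from Proposition \ref{prop:well}. The central algebraic fact I would establish first is the Fenchel--Moreau duality $H^{**}=H$ in the costate variable, that is, $\sup_{b}\{p\cdot b - H^*(t,\x,b)\} = H(t,\x,p)$. This is legitimate because $H(t,\x,\cdot)$ was already shown to be convex (a supremum of affine functions), it is finite (hence proper), and it is lower semicontinuous, so its biconjugate recovers $H$. The upshot is that if we use $H^*$ as the Lagrangian $\mathcal{L}$ in \eqref{lag}, the associated Hamiltonian $\sup_b\{p\cdot b - \mathcal{L}(t,\x,b)\}$ equals $H$.

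The one bookkeeping issue is that the value function \eqref{value} satisfies a terminal-value HJ equation (integration from $t$ to $T$), whereas the framework \eqref{lag} is stated for an initial-value problem (integration from $0$ to $t$). I would resolve this by time reversal: setting $\tau = T-t$ and $\psi(\x,\tau):=V(\x,T-\tau)$, one checks that $\psi$ solves the forward-in-time HJ equation $\psi_\tau + \hat H(\tau,\x,D\psi)=0$ with $\psi(\x,0)=g(\x)$, where $\hat H(\tau,\x,p):=H(T-\tau,\x,p)$. Applying \eqref{lag} with the Lagrangian $\hat{\mathcal{L}}(\tau,\x,b)=\hat H^*(\tau,\x,b)=H^*(T-\tau,\x,b)$ represents $\psi$ as an infimum over trajectories $\xi$ with $\xi(\tau)=\x$. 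Undoing the time reversal via the substitution $s=T-\sigma$ and $x(s)=\xi(T-s)$, so that the velocity slot $\dot\xi(\sigma)$ becomes $-\dot x(s)$, turns this into exactly the right-hand side of \eqref{lagrange}; the boundary term $g(\xi(0))$ becomes $g(x(T))$ and the constraint $\xi(\tau)=\x$ becomes $x(t)=\x$. Since the infimum appearing in \eqref{lag} is, by that framework, the unique viscosity solution of this HJ equation and $\psi$ is one as well, the two coincide, which yields \eqref{lagrange}.

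For the second equality \eqref{dual}, I would simply substitute $\beta(s)=-\dot x(s)$, so that the admissible trajectories in \eqref{lagrange} correspond to pairs $(x,\beta)$ with $\dot x=-\beta$. By Proposition \ref{prop:well}, $H^*(s,x(s),\beta(s))=+\infty$ whenever $\beta(s)\notin \Conv(B(x(s),s))$, so any trajectory violating this inclusion on a set of positive measure contributes an infinite integral and cannot compete in the infimum, provided a finite-cost competitor exists. Hence restricting to $\beta(s)\in\Conv(B(x(s),s))$ leaves the infimum unchanged, giving \eqref{dual}.

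The main obstacle, and the step I would scrutinize most carefully, is verifying that $H^*$ is an admissible Lagrangian for the framework \eqref{lag}: unlike a classical finite-valued Lagrangian, $H^*$ is extended-real-valued (it equals $+\infty$ outside $\Conv(B(\x,t))$ by Proposition \ref{prop:well}), so I would need to confirm that the cited representation persists for such convex Lagrangians, or else argue directly through the viscosity-solution characterization rather than the trajectory formula. A related conceptual point worth a remark is that passing to $H^*$ implicitly convexifies the admissible velocity set from $B(\x,t)$ to $\Conv(B(\x,t))$; this relaxation does not alter the value function precisely because it leaves the Hamiltonian $H$, and hence the HJ equation, unchanged, which is exactly why the uniqueness argument closes cleanly.
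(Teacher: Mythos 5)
Your proposal is correct and follows essentially the same route as the paper's proof: identify the right-hand side of \eqref{lagrange} as the Lagrangian value function \eqref{lag} with $H^*$ as Lagrangian, establish $H^{**}=H$, conclude $V\equiv W$ by uniqueness of viscosity solutions, and obtain \eqref{dual} from Proposition~\ref{prop:well}. The only differences are refinements: you invoke Fenchel--Moreau abstractly (which neatly sidesteps the paper's use of $D_pH$, i.e.\ an implicit differentiability assumption, in its direct proof that $H\leq H^{**}$), you make explicit the time-reversal bookkeeping between the terminal-value problem and the initial-value form of \eqref{lag}, and you rightly flag that $H^*$ is extended-real-valued --- a point the paper's proof passes over silently.
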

\begin{proof}
Let us define
\begin{equation*}
    W(\x,t):=\inf \left \{ \int_t^T H^*(s,x(s),-\dot x(s))ds +g(x(T))): x(t) = \x \right \}.
\end{equation*}
It follows form  the Lagrangian formula~\eqref{lag} that $W$ solves  the following HJ equation in the viscosity sense:
\begin{align*}
\begin{cases}
    -W_t + H^{**}(t,\x,DW)=0 &\quad \text{in}\quad  \R^n \times (0,T),\\
    W(\x,T)=g(\x) &\quad \text{on} \quad \R^n,
    \end{cases}
\end{align*}
where $H^{**}(t, \x, p):=\sup_{b \in \R^n} \left \{ b\cdot p - H^*(t, \x, b) \right \}$ is the convex conjugate of $H^*(t, \x, \cdot)$.

By the definition of $H^*$, we first notice that for any $p, b \in \mathbb{R}^n$
\[
H^* (t, \x, b) \geq p\cdot b - H (t, \x, p),
\]
which implies that 
\[
H (t, \x, p) \geq p\cdot b - H^* (t, \x, b).
\]
Taking supremum of both sides with respect to $b$ yields 
$H \geq H^{**}$. 

To show $H \leq H^{**}$, 
we first observe that
\begin{align*}
    H^{**}(t, \x,p)&=\sup_{b\in \R^n} \left \{ p\cdot b - H^*(t, \x,b)\right \}
    \\&=\sup_{b\in \R^n} \left [ p \cdot b - \sup_{p'} \left \{ p' \cdot b - H(t,\x,p') \right \} \right ]\\
    &=
    \sup_{b\in \R^n} \inf_{p'} \left \{ (p-p')\cdot b + H(t, \x,p') \right \}.
\end{align*}
Therefore, 
\begin{equation*}
    H^{**}(t, \x,p) \geq \inf_{p'}  \left \{ H(t,\x, p' )-(p'-p)\cdot b\right \}
\end{equation*}
for all $b \in  \R^n.$ 
 Choosing $b= D_p H(t,\x,p)$, it follows from the convexity of $H(t, \x, \cdot)$  that
\begin{align*}
    H(t, \x, p')-(p-p')\cdot b &= H(t,\x,p') - (p'-p)\cdot D_p H(t,\x,p)
    \\& \geq H(t,\x,p),
\end{align*}
which implies that $H^{**} \geq H$. 
Therefore, we have  $H^{**} =H$. By the uniqueness of the viscosity solution, we conclude that $V\equiv W$.

To show that the second formula~\eqref{dual} is valid, 
we recall Proposition~\ref{prop:well} implying that
$H^*(t, \x,b) < \infty$ if and only if $b\in \Conv(B(\x,t))$. 
The second formula directly follows from the definition of  $B(x(s), s)$ and Proposition~\ref{prop:well}.
\end{proof}

Our representation formula is equivalent to the one in \cite{lee2021computationally}. 
However, departing from their DP approach, 
our simple derivation makes direct use of the Lagrangian formula, taking the convex conjugate of the Hamiltonian as the Lagrangian. 
Another important observation is that the Lagrangian framework provides a natural representation of the value function with a convex function $H^*$ even if the original stage-cost function $L$  is nonconvex. 
Exploiting this property, we propose a controller synthesis scheme via convex optimization in Section~\ref{sec:num}.
 
Now, our main question is whether the optimal solution of \eqref{dual} yields to obtain a feasible control $u(t) \in U$. In general, the convex conjugate of the Hamiltonian can be represented as
\begin{equation*}
    H^*(t,\x,b)=\sum_{i=1}^N \gamma_i \tilde L (t,\x,b_i)
\end{equation*}
with $b=\sum_{i=1}^N \gamma_i b_i \in \Conv(B(\x,t))$, where $b_i \in B(\x,t)$, $\gamma_i \geq 0$ and $\sum_{i=1}^N \gamma_i=1$. This representation comes from the duality property of the Legendre transform~\cite{rockafellar2009variational}. However, it is unclear whether an optimal $\beta(t) \in \Conv(B(x(t),t))$ induces an admissible control $u(t) \in U$. We tackle this issue in Section~\ref{sec:control}.

\subsection{Representation Formulas for Constrained Optimal Control} 

We now extend the representation formula~\eqref{dual} to the case of state-constrained optimal control. 
Specifically, the  state trajectory of \eqref{sys} must lie in a convex compact set $\bar\Omega$, namely, $x(t) \in \bar \Omega$ for any $t$ and for some bounded open set $\Omega \subset \R^n$. 
The value function for the state-constrained optimal control problem is defined as
\begin{equation}\label{value_c}
V(\bm{x},t):=\inf_{u \in \mathcal{U}} \left \{  \int_t^T L(s,x(s),u(s)) \de s+g(x(T)) : x(t) = \bm{x}, x(s) \in \bar{\Omega} \right \}.
\end{equation}
We further assume that the state trajectory can be driven into $\bar \Omega$ with some control $u \in \mathcal{U}$ at the boundary of $\Omega$, which can be stated as follows:
\begin{assumption}
There exists a positive constant $\nu$ such that for all $\x \in \partial \Omega$ and  $t\in[0,T]$, there exists $a \in U$ satisfying $f(t,\x,a) \cdot n(\x) \leq -\nu$, where the vector $n(\x) \in \mathbb{R}^n$ is normal to $\partial \Omega$  at $\x$.
\end{assumption}
Under this assumption, by the classical result on state-constrained HJ equation \cite{capuzzo1990hamilton}, $V$ is the unique viscosity solution in $\Omega \times (0,T)$ and the viscosity supersolution on $\bar \Omega \times (0,T)$ of 
\begin{align*}
\begin{cases}
    -V_t +H(t, \x,DV) = 0 &\quad \text{in} \quad \Omega \times (0,T),\\
    V(\x,T)=0 &\quad \text{on} \quad \bar \Omega,
    \end{cases}
\end{align*}
where the Hamiltonian $H$ is the same as before. Such $V$ is called the state-constraint viscosity solution, proposed by \cite{soner1986optimal}. 
Taking the convex conjugate $H^*$ as the Lagrangian, we can derive 
 representation formulas for the state-constrained value function $V$,
 similar to \eqref{lagrange} and \eqref{dual}.

\begin{thm}\label{constrained}
For $(\bm{x},t) \in \bar \Omega \times [0,T]$, 
the state-constrained value function $V$~\eqref{value_c} can be represented as
\begin{align}
    V(\x,t)&=\inf \left \{\int_t^T H^*(s,x(s),-\dot x(s)) \de s +g(x(T)): x(t) = \x, x(s) \in \bar\Omega \right \}\label{lagrange_c}\\
    &=\inf \left \{\int_t^T H^{*}(s,x(s),\beta(s)) \de s+g(x(T)): x(t) = \x, x(s) \in \bar\Omega, \beta(s)\in \Conv(B(x(s),s)) \right\}.\label{dual_c}
\end{align}
\end{thm}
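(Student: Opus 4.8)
The plan is to reduce Theorem~\ref{constrained} to the unconstrained Theorem~\ref{unconstraint} through an exterior penalization argument in the spirit of \cite{capuzzo1990hamilton}, so that the hard constraint $x(s)\in\bar\Omega$ is recovered in the limit as a penalty parameter degenerates. First I would fix a penalty $\psi(\x):=d(\x,\bar\Omega)$, the distance to $\bar\Omega$, so that $\psi\equiv 0$ on $\bar\Omega$ and $\psi>0$ outside, and for each $\epsilon>0$ introduce the penalized Lagrangian
\[
\mathcal{L}_\epsilon(t,\x,b):=H^*(t,\x,b)+\tfrac{1}{\epsilon}\,\psi(\x).
\]
Since the added term depends only on $\x$, the map $b\mapsto\mathcal{L}_\epsilon(t,\x,b)$ remains convex, and its convex conjugate in $b$ is $H_\epsilon(t,\x,p)=H^{**}(t,\x,p)-\tfrac1\epsilon\psi(\x)=H(t,\x,p)-\tfrac1\epsilon\psi(\x)$, where the last identity uses the equality $H^{**}=H$ established in the proof of Theorem~\ref{unconstraint}.

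Next I would invoke the Lagrangian formula \eqref{lag} applied to $\mathcal{L}_\epsilon$ to conclude that
\[
V^\epsilon(\x,t):=\inf\left\{\int_t^T\mathcal{L}_\epsilon(s,x(s),-\dot x(s))\,\de s+g(x(T)):x(t)=\x\right\}
\]
is the unique viscosity solution of the penalized equation $-V^\epsilon_t+H(t,\x,DV^\epsilon)-\tfrac1\epsilon\psi(\x)=0$ with terminal data $g$. This is precisely the classical penalized approximation of the state-constrained HJ equation, whose solutions are known, under Assumption~1, to converge as $\epsilon\to 0$ to the state-constraint viscosity solution $V$ of \eqref{value_c} (cf. \cite{capuzzo1990hamilton,soner1986optimal}). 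I would then pass to the limit. For trajectories confined to $\bar\Omega$ the penalty vanishes, so $V^\epsilon(\x,t)\le\inf\{\int_t^T H^*(s,x(s),-\dot x(s))\,\de s+g(x(T)):x(t)=\x,\,x(s)\in\bar\Omega\}$ for every $\epsilon$; conversely, a near-minimizer of $V^\epsilon$ can accrue only a vanishing penalized excursion outside $\bar\Omega$ as $\epsilon\to 0$, so any limiting trajectory stays in $\bar\Omega$ and the reverse inequality follows. Combining these with $V=\lim_\epsilon V^\epsilon$ yields the Lagrangian representation \eqref{lagrange_c}, and the second formula \eqref{dual_c} follows exactly as in Theorem~\ref{unconstraint}: by Proposition~\ref{prop:well}, $H^*(s,\x,b)=+\infty$ unless $b\in\Conv(B(\x,s))$, so every finite-cost trajectory automatically satisfies $-\dot x(s)=\beta(s)\in\Conv(B(x(s),s))$.

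The step I expect to be the main obstacle is the rigorous passage to the limit $\epsilon\to0$ together with the interchange of the limit and the infimum. Two issues require care. First, one must rule out a Lavrentiev-type relaxation gap, that is, show that the excursions of penalized minimizers outside $\bar\Omega$ are of order $\epsilon$ and can be corrected by steering the trajectory back into $\bar\Omega$ at a cost tending to $0$; this is exactly where Assumption~1 enters, since the uniform inward-pointing condition $f(t,\x,a)\cdot n(\x)\le-\nu$ renders $\bar\Omega$ controlled-invariant in a quantitative sense and guarantees the existence of constrained near-optimal trajectories. Second, to pass the $\liminf$ through the running cost along a convergent sequence of trajectories one needs lower semicontinuity, which is supplied by the convexity of $b\mapsto H^*(t,\x,b)$ together with the weak-$L^1$ compactness of the admissible velocities $-\dot x(s)$, these being confined to the compact set $\Conv(B(x(s),s))$ by Proposition~\ref{prop:well}. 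Once these two ingredients are secured, the identification $V=\lim_\epsilon V^\epsilon$ and both representation formulas follow as outlined.
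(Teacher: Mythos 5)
Your proposal is correct and follows essentially the same route as the paper: penalize the Lagrangian $H^*$ outside $\bar\Omega$ by $\tfrac{1}{\epsilon}\,\mathrm{dist}(\cdot,\partial\Omega)$, invoke the convergence of the penalized value functions to the state-constraint viscosity solution of \cite{capuzzo1990hamilton,soner1986optimal} under Assumption~1, identify the limit with the constrained infimum, and deduce \eqref{dual_c} from Proposition~\ref{prop:well}. In fact, the limit-versus-infimum interchange you single out as the main obstacle is exactly the step the paper's proof passes over in one sentence, so your sketch of the two-sided inequality and the role of the inward-pointing condition is a useful elaboration rather than a deviation.
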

\begin{proof}
Define a penalty function $\varphi( \x)$ as
\begin{equation*}
\varphi ( \x):=
\begin{cases}
0 &\text{ if } \x\in \Omega,\\
\mathrm{dist}(\x,\partial \Omega) &{\text{ if } \x\not\in\Omega},
\end{cases}
\end{equation*}
where $\mathrm{dist}(\x, A)$ denotes the Euclidean distance between a vector $\x$ and a set $A$. 
Given any $\epsilon>0$ it is known from \cite{capuzzo1990hamilton} that 
\begin{equation*}
    W_\epsilon(\x,t):=\inf_u \left [  \int_t^T \left \{H^*(s,x(s),-\dot x(s))+\frac{1}{\epsilon}\varphi (x(s)) \right \}ds+g(x(T)) : x(t) = \x \right ]
\end{equation*}
converges uniformly to $W$ as $\epsilon$ goes to $0$, where $W(x,t)$ solves
\begin{align*}
\begin{cases}
    -W_t+H(t,x,DW)=0 &\quad\text{in}\quad \Omega\times (0,T),\\
    W(x,T)=0 &\quad\text{on}\quad\bar \Omega,
\end{cases}
\end{align*}
in the viscosity sense. 
By the uniqueness of the viscosity solution, $W_\epsilon$ converges uniformly to the value function $V$ for the state-constrained optimal control problem as $\epsilon$ tends to zero.
Since $W_\epsilon(\bm{x}, t)$ converges to \eqref{lagrange_c} given $(\bm{x},t) \in \bar \Omega \times [0,T]$,
 the result follows. 
\end{proof} 

This is an interesting result that bridges the gap between the DP-based generalized Lax formula and the Lagrangian formula 
 for state-constraint optimal control problems. 
 Specifically, our result is novel in the sense that  the generalized Lax formula can be understood through the lens of state-constrained viscosity solutions.
  In  \cite{lee2021computationally}, state-constraint optimal control problems are considered with time-varying constraint sets.  Therefore, the standard state-constrained viscosity solution approach is unavailable. 
However, we consider a time-invariant constraint set $\bar \Omega$ to make use of the theory of viscosity solutions and obtain a representation formula similar to that for unconstrained optimal control problems. 
More precisely, the value function of a state-constrained optimal control problem is approximated as that of the corresponding unconstrained problem by penalizing the region outside the constraint set. 
We then use Theorem \ref{unconstraint}  and take a limit with respect to the penalty parameter. 
Such a limit exists due to the classical theory of state-constrained viscosity solutions and we arrive at the same conclusion as that of Theorem~\ref{constrained}.

\section{Construction of Optimal Controls} \label{sec:control}

In this section, we present a method for extracting an optimal control $u$ from an optimal $\beta$ obtained in evaluating the representation formula~\eqref{dual} or \eqref{dual_c}.
Our method constructs an admissible control in the form of a simple function using the notion of $\delta$-net. 
One of our main observations is that under some structural assumptions, the optimal value function  is approximated accurately as long as the controlled state trajectory is close to an optimal trajectory. 

We introduce structural assumptions needed for our analysis. They are imposed throughout this section.
\begin{assumption}\label{ass2}
The stage-cost function is separable, i.e., 
$$L(t, \x, a)=S(t, \x)+R(t, a),$$
and for each $t$, $S(t,\x)$ and $R(t,a)$ are convex in $\x$ and $a$, respectively.
\end{assumption}
Since $\x \mapsto L(t, \x, a)$ is Lipschitz continuous, so is $\x \mapsto S(t, \x)$.
The aforementioned assumption will allow us to track duality properties effectively. 
\begin{assumption}\label{ass3}
The vector field and the terminal cost function satisfy the following properties:
\begin{itemize}
    \item $f(t, \x, a)$ is affine in $\x$, that is, $$f(t, \x,a)=Ax+ h(t, a)$$ for some matrix $A \in \mathbb{R}^{n\times n}$. 
Moreover, for each $t$, $h (t, a)$ is Lipschitz continuous in $a$;
    \item $g$ is convex.
\end{itemize}
\end{assumption}

Under these assumptions,  $H^*$ can be computed explicitly  as illustrated in \cite{lee2021computationally}.
 Let 
 \[
 \tilde R(t, b):=\inf_{a\in U} \{R(t,a): -f(t,\x,a) = b\}.
 \]
Then, $\tilde L(t, \x, b)=S(t, \x)+\tilde R(t,b)$. 
Therefore, the Hamiltonian has the following separable structure:
\begin{align*}
    H(t, \x,p)&=\sup_{b\in B(\x,t)} \{p \cdot b - \tilde L (t, \x,b)\}\\
    &=-S(t,\x)+\sup_{b\in B(\x,t)} \{p \cdot b - \tilde R (t,b)\}.
\end{align*}
The convex conjugate of the Hamiltonian can be expressed as
\begin{align*}
    H^*(t, \x,b)&=\sup_p\{b\cdot p - H(t, \x,p)\}\\
    &=S(t, \x)+\sup_p \left [ b\cdot p - \sup_{b'\in B(\x,t)} \{p\cdot b' -\tilde R (t,b')\} \right ]. 
\end{align*}
Letting $H_R(t,\x,p):=\sup_{b\in B(\x,t)} \{p\cdot b - \tilde R(t,b)\}$, we have
\begin{equation*}
    H^*(t, \x,b)=S(t,\x)+ H_R^*(t,\x,b).
\end{equation*}
Now using $B(\x,t)=\{b : b=-A\x+ h(t,a), a \in U\}$, we further have
\begin{equation}\label{c_ham_exp}
    H^*(t, \x, b)=S(t, \x)+ H_f^*(t,b+A\x)
\end{equation}
where $H_f^*(t,b):= \sup_p \{p \cdot b - H_h (t,p)\}$ and $H_h(t,p):=\sup_{a \in U} \{-p \cdot h(t,a)-R(t,a)\}$.

In the following lemma, we state the relaxation theorem \cite{papageorgiou1987relaxation} that guarantees the existence of an admissible control such that the controlled trajectory can accurately approximate any state trajectory $-\dot x(s) \in \Conv(B(x(s),s))$ with same initial condition.
 \begin{lem}\label{lem:relax}
Let $x:[t,T]\rightarrow \R^n$ satisfy $- \dot x(s) \in \Conv(B(x(s),s))$. Then, for any $\epsilon>0$, there exists an admissible control $\tilde u  \in \mathcal{U}$ and a state trajectory $\tilde x : [t,T] \rightarrow \R^n$  satisfying $\dot{\tilde{x}} = f(s, \tilde{x}(s),\tilde u(s))$ for $s\in[t,T]$ and $\tilde{x}(t) = x(t)$ such that 
\[
|\tilde x(s) - x(s)| <\epsilon, \quad s \in [t, T].
\]
\end{lem}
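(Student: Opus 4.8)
The plan is to recognize the claim as an instance of the relaxation theorem for differential inclusions, reduce it to verifying that theorem's hypotheses, and then recover an admissible control by a measurable-selection argument. First I would introduce the set-valued map $F(s,\x):=\{f(s,\x,a):a\in U\}$ and note that, since $B(\x,s)=-F(s,\x)$, the hypothesis $-\dot x(s)\in\Conv(B(x(s),s))$ says precisely that $x$ solves the convexified (relaxed) inclusion $\dot x(s)\in\Conv F(s,x(s))$ with the prescribed initial value. The objective then becomes to produce an ordinary solution $\tilde x$ of $\dot{\tilde x}(s)\in F(s,\tilde x(s))$, with $\tilde x(t)=x(t)$, that is uniformly $\epsilon$-close to $x$; the admissible control $\tilde u$ is extracted afterward.

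Second, I would check the standing hypotheses required by the relaxation theorem \cite{papageorgiou1987relaxation}. Because $U$ is compact and $f$ is continuous, $F(s,\x)$ is nonempty and compact-valued; the Lipschitz continuity of $f$ in $\x$ yields Lipschitz dependence of $F$ on $\x$ in the Hausdorff metric, while Lipschitz continuity in $t$ supplies the needed regularity in $s$. Since the given $x$ is a fixed continuous curve on the compact interval $[t,T]$, its image is compact, so restricting to a bounded tube around it gives a uniform bound on $f$, hence an integrable growth bound on $\|F\|$ that keeps neighboring trajectories from escaping on the finite horizon. With these verified, the relaxation theorem provides, for the prescribed $\epsilon$, an absolutely continuous $\tilde x$ satisfying $\dot{\tilde x}(s)\in F(s,\tilde x(s))$ a.e., $\tilde x(t)=x(t)$, and $\sup_{s\in[t,T]}|\tilde x(s)-x(s)|<\epsilon$.

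Third, I would convert this inclusion solution into a control. The map $s\mapsto\{a\in U: f(s,\tilde x(s),a)=\dot{\tilde x}(s)\}$ has nonempty values (since $\dot{\tilde x}(s)\in F(s,\tilde x(s))$ a.e.) which are closed by continuity of $f$ and compactness of $U$; Filippov's measurable-selection lemma then yields a measurable selection $\tilde u(s)\in U$, i.e.\ an admissible $\tilde u\in\mathcal{U}$ with $\dot{\tilde x}=f(s,\tilde x(s),\tilde u(s))$, completing the construction.

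The main obstacle is not the invocation of the relaxation theorem itself but the verification of the Carath\'eodory-type measurability conditions needed for Filippov's selection lemma to apply, so that an \emph{admissible} (measurable) $\tilde u$ is genuinely obtained; a secondary technical point is confirming that the approximating trajectories remain within a bounded region on which the Lipschitz and growth bounds hold uniformly, which is what makes density of ordinary solutions in relaxed solutions available on the entire interval $[t,T]$.
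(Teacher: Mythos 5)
Your proposal is correct and matches the paper's treatment: the paper states this lemma as a direct instance of the relaxation theorem of Papageorgiou (cited, not proved), which is exactly the reduction you carry out, with the standard Filippov measurable-selection step to recover an admissible control from the inclusion solution. Your verification of the compactness, Lipschitz, and growth hypotheses is the routine content the paper leaves implicit.
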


We can then show that 
the value function $V(\x, t)$ can be approximated with an arbitrary precision using an optimal solution of \eqref{dual} or \eqref{dual_c}
although an optimal $\beta$ lies in $\Conv (B(x(s), s))$ rather than $B(x(s), s)$. 

\begin{thm}\label{main_thm}
Suppose $H_f^*(t,\cdot)$ is linear and let $(x(\cdot),\beta(\cdot))$ be an optimal solution of \eqref{dual}  (or \eqref{dual_c}). Then, for any $\epsilon>0$, there exists an admissible control $\tilde u \in \mathcal{U}$ and
a state trajectory $\tilde x : [t,T] \rightarrow \R^n$  satisfying $\dot{\tilde{x}} = f(s, \tilde{x}(s),\tilde{u}(s))$ for $s\in[t,T]$ and $\tilde{x}(t) = \x$ 
such that
\begin{equation*}
    \bigg|  \underbrace{\bigg \{\int_t^T H^*(s,x(s),\beta(s))\de s+g(x(T)) \bigg \}}_{= V(\x, t)}- \bigg \{ \underbrace{\int_t^T H^*(s,\tilde x(s),\tilde \beta(s))\de s-g(\tilde x(T))}_{\mathrm{approximation}} \bigg \}\bigg|<\epsilon,
\end{equation*}
where  $\tilde \beta(s)=-f(s,\tilde x(s),\tilde u(s))$.
\end{thm}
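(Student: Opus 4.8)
The plan is to combine the relaxation theorem (Lemma~\ref{lem:relax}) with the explicit separable form~\eqref{c_ham_exp} of $H^*$, and to use the linearity of $H_f^*(s,\cdot)$ to control the single term that relaxation alone does not make small. Since $(x(\cdot),\beta(\cdot))$ solves~\eqref{dual}, the change of variables behind~\eqref{dual} (equivalently, the identity of~\eqref{lagrange} and~\eqref{dual}) forces $\dot x(s)=-\beta(s)$ with $-\dot x(s)\in\Conv(B(x(s),s))$; in particular $x$ is Lipschitz and Lemma~\ref{lem:relax} applies. For a tolerance $\delta>0$ to be fixed at the end, it yields an admissible $\tilde u\in\mathcal{U}$ whose trajectory $\tilde x$ satisfies $\tilde x(t)=\x$ and $\sup_{s\in[t,T]}|x(s)-\tilde x(s)|<\delta$. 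Setting $e(s):=x(s)-\tilde x(s)$, we have $e(t)=0$, $|e(s)|<\delta$, and $e$ Lipschitz; the goal is to bound the difference of the two costs by $C\delta$.

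First I would reduce the cost difference using~\eqref{c_ham_exp}. For the relaxed pair, $\beta(s)+Ax(s)=-\dot x(s)+Ax(s)$, so $H^*(s,x(s),\beta(s))=S(s,x(s))+H_f^*(s,-\dot x(s)+Ax(s))$. For the genuine pair, $\tilde\beta(s)=-f(s,\tilde x(s),\tilde u(s))=-A\tilde x(s)-h(s,\tilde u(s))$, hence $\tilde\beta(s)+A\tilde x(s)=-h(s,\tilde u(s))$ and $H^*(s,\tilde x(s),\tilde\beta(s))=S(s,\tilde x(s))+H_f^*(s,-h(s,\tilde u(s)))$. Since $-h(s,\tilde u(s))=-\dot{\tilde x}(s)+A\tilde x(s)$, the difference of the two $H_f^*$ arguments is exactly $-\dot e(s)+Ae(s)$. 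Invoking linearity, write $H_f^*(s,b)=c(s)\cdot b$ (any additive constant cancels in the difference); then the integrand difference collapses to $[S(s,x(s))-S(s,\tilde x(s))]+c(s)\cdot(-\dot e(s)+Ae(s))$, while the boundary contribution is $g(x(T))-g(\tilde x(T))$.

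I would then estimate each piece in terms of $\delta$. The $S$-term is controlled by Lipschitz continuity of $S(s,\cdot)$ (inherited from that of $L$), giving $|\int_t^T[S(s,x)-S(s,\tilde x)]\,\de s|\le L_S(T-t)\delta$; likewise $|g(x(T))-g(\tilde x(T))|\le L_g\delta$ by Lipschitz continuity of $g$; and $\int_t^T c(s)\cdot Ae(s)\,\de s$ is bounded by $(\sup_s|c(s)|)\,\|A\|\,(T-t)\delta$. The remaining term $\int_t^T c(s)\cdot\dot e(s)\,\de s$ is the crux: here $\dot e$ is \emph{not} small, because relaxation makes $\tilde x$ close to $x$ while their velocities oscillate and differ by $O(1)$, so the integral cannot be bounded pointwise. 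I would resolve it by integration by parts, $\int_t^T c\cdot\dot e\,\de s=[c\cdot e]_t^T-\int_t^T\dot c\cdot e\,\de s$, which (using $e(t)=0$) is at most $|c(T)|\delta+(\sup_s|\dot c(s)|)(T-t)\delta$, provided $s\mapsto c(s)$ is Lipschitz. Summing the four bounds yields a total of at most $C\delta$ with $C$ depending only on $L_S,L_g,\|A\|,T-t,\sup|c|,|c(T)|,\sup|\dot c|$, and choosing $\delta<\epsilon/C$ finishes the proof.

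The main obstacle is precisely this velocity-mismatch term $\int_t^T c\cdot\dot e\,\de s$, and it is exactly what the linearity hypothesis on $H_f^*$ is designed to defeat: linearity lets the $H_f^*$-difference be written as the single functional $c(s)\cdot(-\dot e+Ae)$, after which integration by parts transfers the troublesome derivative off of $e$ and onto the regular coefficient $c$. For a nonlinear $H_f^*$ this factorization fails and the oscillations of $\dot{\tilde x}$ no longer cancel. A secondary technical point I would verify from the standing Lipschitz-in-$t$ assumptions is that $c(s)$ is Lipschitz, so that $\dot c$ is essentially bounded; if $c$ is only of bounded variation, the Riemann--Stieltjes form of integration by parts gives the same estimate. (Here I read the terminal term in the ``approximation'' as $+g(\tilde x(T))$, the genuine cost of $\tilde x$, so that it is comparable to $g(x(T))$ via Lipschitz continuity of $g$.)
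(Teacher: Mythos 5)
Your proposal is correct and follows essentially the same route as the paper: the relaxation lemma (Lemma~\ref{lem:relax}) combined with the separable form \eqref{c_ham_exp}, with the linearity of $H_f^*$ reducing the whole estimate to the velocity-mismatch integral $\int_t^T c(s)\cdot\dot e(s)\,\de s$, which the paper bounds via the fundamental theorem of calculus after treating the linear coefficient as a constant matrix. Your integration-by-parts step is a slightly more careful version of that final estimate that also covers a time-varying Lipschitz (or BV) coefficient $c(s)$ --- a technicality the paper glosses over by writing the coefficient as ``some matrix $A$'' --- and it reduces exactly to the paper's argument when $\dot c\equiv 0$.
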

\begin{proof}
By Lemma~\ref{lem:relax}, for any $\varepsilon > 0$, we can select $\tilde{u} \in \mathcal{U}$ such that $|x(s)-\tilde x(s)| < \varepsilon$ for $s \in [t, T]$. 
Since $g$ is Lipschitz continuous, $|g(x(T)) - g(\tilde{x}(T))| \leq C \varepsilon$ for some positive constant $C$. 

It follows from the expression \eqref{c_ham_exp} of $H^*$ that
\begin{align*}
&H^*(s,x(s),\beta(s))-H^*(s,\tilde x(s),\tilde \beta(s))\\& = S(s,x(s))-S(s,\tilde x(s)) +H_f^*(s,\beta(s)+Ax(s)) - H_f ^* (s,\tilde \beta(s)+A \tilde x(s))\\
& \leq C \varepsilon + A ( \beta(s) - \tilde{\beta}(s))
\end{align*}
for some positive constant $C$ and some matrix $A \in \mathbb{R}^{n \times n}$
 since $\x \mapsto S(s, \x)$ is Lipschitz continuous and
$b \mapsto H_f^*(s, \x,b)$ is linear.

We now observe that
\begin{equation}\nonumber
\begin{split}
\left |\int_t^T\beta(s)-\tilde \beta(s) \de s \right |
&= \left |\int_t^T-\dot{x}(s) +  \dot{\tilde{x}}(s) \de s \right |\\
&=| x(t) - x(T) -\tilde{x} (t) + \tilde{x}(T) | \\
&\leq C \varepsilon
\end{split}
\end{equation}
for some positive constant $C$. 
Therefore, the result follows.
\end{proof}
The linearity assumption on $H_f^*(s,\x, \cdot)$ might {seem} restrictive. However,  this assumption does not require the linearity of $f$.
We discuss a class of nonlinear problems that satisfy the assumption in Section~\ref{sec:lin}.
Before doing so, we present a concrete way to recover an admissible control using a $\delta$-net in the following subsection.

\subsection{From the Representation Formulas to Approximate Optimal Controls}

We now assume that there exists an optimal solution $(x(s),\beta(s))$ of \eqref{dual} (or \eqref{dual_c}),  where $\beta(s) \in \Conv(B(x(s),s))$. 
Our goal is to show that there exists a finite set $U_\delta \subset U$  such that one can construct an admissible control $\bar{u}$ satisfying $\bar{u}(s) \in U_{\delta}$ and $|\bar x(s)- x(s)|<\epsilon$, 
where $\bar x(s)$ is the state trajectory controlled by $\bar{u}$ starting from $\bar x(t)=\x = x(t)$. If such a finite set exists, the  control set $U$ can be degenerated, which is beneficial for numerical implementation. To this end, we introduce the $\delta$-net of a given compact set $U$, denoted by $U_\delta$ for $\delta>0$.
\begin{defn}
A finite set $U_\delta$  is said to be a \emph{$\delta$-net} provided that $(i)$ for any $a,a'\in U_\delta \subseteq U$, $|a-a'| > \delta$, and $(ii)$ $\{B_\delta(a): a \in U_\delta \}$ forms a covering, i.e., $U \subset \bigcup_{a \in U_\delta} B_\delta(a)$, where $B_\delta (a)$ denotes the open ball centered at $a$ with radius $\delta$. 
\end{defn}
Since $U$ is compact, a $\delta$-net exists \cite{pollard1990empirical}.

Suppose that we choose an admissible control $u  \in \mathcal{U}$ from Lemma \ref{lem:relax}. In Proposition~\ref{prop:u}, we prove that  for any $\delta>0$ and $U_\delta$, one can find an admissible control $\bar{u}$ 
such that  $\bar u(s) \in U_\delta$  and $|\bar u(s) - u(s)| < \delta$ for $s \in [t, T]$. 
Even if we choose such $\bar u$, the measurability of this function is unclear. The following proposition yields that if $\bar u$ is constructed in an appropriate way, it is measurable on $[t,T]$. Our idea is similar to constructing a simple function that approximates a given measurable function in measure theory.
\begin{prop}\label{prop:u}
Let $(x, \beta)$ be an optimal solution of \eqref{dual} or \eqref{dual_c}. 
For any  $\delta >0$, there exist a measurable control $\bar{u} \in \mathcal{U}$ such that $\bar u(s) \in U_\delta$ and the corresponding state trajectory $\bar x: [t,T] \to \mathbb{R}^n$ with $\bar{x} (t) = \x = x(t)$ satisfies 
\[
|\bar x(s) - x(s)|<C\delta, \quad s \in [t, T]
\]
for some positive constant $C$. 
\end{prop}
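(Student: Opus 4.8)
The plan is to build $\bar u$ by quantizing the admissible control supplied by Lemma~\ref{lem:relax} onto the finite net $U_\delta$, and then to control the resulting trajectory error through a Gronwall estimate. First I would fix $\delta>0$ and invoke Lemma~\ref{lem:relax} with tolerance $\epsilon$ set equal to $\delta$: since the optimal $(x,\beta)$ satisfies $-\dot x(s)\in\Conv(B(x(s),s))$, this produces an admissible $u\in\mathcal{U}$ (taking values in $U$) together with its trajectory $x_u$, satisfying $\dot x_u(s)=f(s,x_u(s),u(s))$, $x_u(t)=\x$, and $|x_u(s)-x(s)|<\delta$ for all $s\in[t,T]$. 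It then remains to approximate $u$ by a $U_\delta$-valued control $\bar u$ and to compare the trajectory $\bar x$ of $\bar u$ against $x_u$.

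Next I would construct the quantizer explicitly, which is where the measurability concern is resolved. Writing $U_\delta=\{a_1,\dots,a_K\}$, the covering property $U\subset\bigcup_i B_\delta(a_i)$ lets me partition $U$ into disjoint Borel sets $U_i:=U\cap B_\delta(a_i)\setminus\bigcup_{j<i}B_\delta(a_j)$, so that $U_i\subseteq B_\delta(a_i)$, and then define the Borel map $\pi:U\to U_\delta$ by $\pi\equiv a_i$ on $U_i$. Setting $\bar u(s):=\pi(u(s))$ yields a control that is measurable, being the composition of the measurable $u$ with the simple (hence Borel) function $\pi$, which is exactly the simple-function construction alluded to before the statement; moreover $\bar u$ takes values in $U_\delta$ and satisfies $|\bar u(s)-u(s)|<\delta$ for every $s$, since $u(s)\in U_i\subseteq B_\delta(a_i)$ and $\bar u(s)=a_i$. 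Because $\bar u$ is measurable and $f$ is Lipschitz in $\x$, the ODE $\dot{\bar x}=f(s,\bar x,\bar u(s))$ with $\bar x(t)=\x$ admits a unique absolutely continuous solution $\bar x$ on $[t,T]$ by Carath\'eodory's theorem.

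For the trajectory error I would write $\bar x(s)-x_u(s)=\int_t^s[f(\tau,\bar x,\bar u)-f(\tau,x_u,u)]\,\de\tau$ and split the integrand as $[f(\tau,\bar x,\bar u)-f(\tau,x_u,\bar u)]+[f(\tau,x_u,\bar u)-f(\tau,x_u,u)]$. Invoking Assumption~\ref{ass3}, the first term equals $A(\bar x-x_u)$, bounded by $\|A\|\,|\bar x-x_u|$, while the second equals $h(\tau,\bar u)-h(\tau,u)$, bounded by $L_h|\bar u-u|<L_h\delta$ from the Lipschitz continuity of $h(\tau,\cdot)$. This gives $|\bar x(s)-x_u(s)|\le\int_t^s(\|A\|\,|\bar x-x_u|+L_h\delta)\,\de\tau$, so Gronwall's inequality yields $|\bar x(s)-x_u(s)|\le L_h(T-t)e^{\|A\|(T-t)}\,\delta=:C_1\delta$. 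A final triangle inequality against the Lemma~\ref{lem:relax} bound gives $|\bar x(s)-x(s)|\le|\bar x(s)-x_u(s)|+|x_u(s)-x(s)|<(C_1+1)\delta=:C\delta$, as claimed, with $C$ independent of $\delta$ because the choice $\epsilon=\delta$ keeps the relaxation error of the same order.

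I expect the only genuine obstacle to be the measurability of $\bar u$: a pointwise nearest-point projection onto $U_\delta$ need not be measurable in general, which is precisely why I would fix the explicit Borel partition $\{U_i\}$ so that $\bar u=\pi\circ u$ is a bona fide measurable, $U_\delta$-valued, simple-type function. Once this is secured, the existence of $\bar x$ and the Gronwall estimate are routine, and the crucial point that $C$ does not degrade with $\delta$ follows from coupling the relaxation tolerance to $\delta$.
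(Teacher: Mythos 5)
Your proposal is correct and follows essentially the same route as the paper: invoke Lemma~\ref{lem:relax} with tolerance $\delta$, quantize the resulting control onto $U_\delta$ via the disjointified balls $\tilde B_i := B_i \setminus \bigcup_{j<i} B_j$ (your Borel map $\pi$ is exactly the paper's $\bar u(s)=\sum_i a_i \mathbf{1}_{\tilde B_i}(\tilde u(s))$), and then bound the trajectory error by the triangle inequality. If anything, your explicit Gronwall step is more careful than the paper's final estimate, which bounds the integrand $|f(s,\bar x(s),\bar u(s))-f(s,\tilde x(s),\tilde u(s))|$ by $C\delta$ directly from Lipschitz continuity without first closing the loop on $|\bar x(s)-\tilde x(s)|$.
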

\begin{proof}
Fix an arbitrary $\delta > 0$ and choose $\tilde x$ and $\tilde u\in \mathcal{U}$ from Lemma \ref{lem:relax} such that $|x(s)-\tilde x(s)| <\delta$ for all $s \in [t,T]$ and $\tilde{x}(t) = \x$. 
Consider the set of balls centered at the elements of a $\delta$-net $U_\delta$ with radius $\delta$, that is, $Y:= \{B_\delta(a): a \in U_\delta\}$. 
For simplicity, we
denote $Y=\{B_1, \ldots ,B_\ell\}$, where the center of $B_i$ corresponds to $a_i$. 
Define a new control $\bar{u}$ as
\begin{equation*}
    \bar u(s):=\sum_{i=1}^\ell  a_i  \mathbf{1}_{\tilde B_i}( \tilde u(s)),
\end{equation*}
where $\tilde B_i := B_i \setminus \bigcup_{j=1}^{i-1}B_j$ and 
$\mathbf{1}_B(x)$ is
 an indicator function such that $\mathbf{1}_B(x) = 1$ if $x\in B$ and 0 otherwise. Then, $\bar u(s)$ is measurable and $|\bar u(s) - \tilde u(s)| <\delta$. 
 Let $\bar{x}$ be the state trajectory controlled by $\bar{u}$ starting from $\bar{x}(t) = \x$.
 We then have for any $s \in [t, T]$
\begin{align*}
|\bar x(s) - x(s)|&\leq |\bar x(s)- \tilde x(s) | + |\tilde x(s) - x(s)|\\
&\leq\int_t^s|f (s, \bar x(s), \bar u(s)) - f(s, \tilde x(s), \tilde u(s))| \de s + \delta\\
&\leq \int_t^s C \delta \de s + \delta
\end{align*}
for some positive constant $C$ because $f(s, \cdot, \cdot)$ is Lipschitz continuous under Assumption~\ref{ass3}. 
Thus, the result follows. 
\end{proof}

This proposition provides an important insight into numerical implementation of the representation formulas because the infinite set $U$ can be replaced with a finite set $U_\delta$ when constructing approximate optimal controls that are admissible. 
More specifically, let $(x(\cdot),\beta(\cdot))$ be an optimal solution of \eqref{dual_c} under the state constraint $x(s)\in\bar \Omega$.
Since $\beta(s) \in \Conv(B(x(s),s))$, one cannot directly obtain an admissible control $u \in \mathcal{U}$ such that $\beta(s)=-f(s,x(s),u(s))$.
 Instead, for each $s$, we choose $u(s)$ to be $\argmin_{a \in U_\delta}|\beta(s)+f(s,x(s), a)|$ for a given $\delta$-net $U_\delta$. It is a reasonable approach, as we can always find a feasible control $u \in \mathcal{U}$ such that the corresponding state trajectory approximates the optimal one  by Lemma~\ref{lem:relax}. 
 Motivated by this observation, a computationally tractable scheme for controller synthesis is discussed in Section~\ref{sec:num}. 
 
 Moreover, by Proposition~\ref{prop:u}, we can further approximate an optimal control $u(s)$ by a measurable control $\bar u \in \mathcal{U}$ such that $\bar u(s) \in U_\delta$.
Then, it follows from Proposition~\ref{prop:u} that the performance gap between the optimal control $u$ and the approximate control $\bar u$ is bounded by $C \delta$ for some positive constant $C$.  

\begin{cor}\label{main_cor}
Suppose $H_f^*(t,\cdot)$ is linear and let $(x(\cdot),\beta(\cdot))$ be an optimal solution of \eqref{dual} (or \eqref{dual_c}). Then, for any $\delta >0$, there exists an admissible control $\bar u \in \mathcal{U}$ with $\bar u (s) \in U_\delta$ and
a state trajectory $\bar x : [t,T] \rightarrow \R^n$  satisfying $\dot{\bar{x}} = f(s, \bar{x}(s),\bar{u}(s))$ for $s\in[t,T]$ and $\bar{x}(t) = \x$ 
such that
\begin{equation*}
    \bigg|  \underbrace{\bigg \{\int_t^T H^*(s,x(s),\beta(s))\de s+g(x(T)) \bigg \}}_{= V(\x, t)}- \bigg \{ \underbrace{\int_t^T H^*(s,\bar x(s),\bar \beta(s))\de s-g(\bar x(T))}_{\mathrm{approximation}} \bigg \}\bigg|<C\delta
\end{equation*}
for some positive constant $C$, 
where  $\bar \beta(s)=-f(s,\bar x(s),\bar u(s))$.
\end{cor}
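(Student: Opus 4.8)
The plan is to recognize that Corollary~\ref{main_cor} is simply the $\delta$-net, quantitative counterpart of Theorem~\ref{main_thm}: the two statements share an identical structure, the only differences being that the admissible control is now required to take values in the finite set $U_\delta$ and the approximation error is quantified as $C\delta$ rather than an arbitrary $\epsilon$. Accordingly, I would re-run the proof of Theorem~\ref{main_thm} essentially verbatim, replacing the appeal to Lemma~\ref{lem:relax} (which only asserts existence of \emph{some} admissible $\tilde u \in \mathcal{U}$ whose trajectory is $\epsilon$-close to $x$) by the stronger Proposition~\ref{prop:u}. The latter furnishes, for the given $\delta$, a \emph{measurable} control $\bar u$ with $\bar u(s) \in U_\delta$ whose trajectory $\bar x$ starts at $\bar x(t)=\x=x(t)$ and obeys the explicit bound $|\bar x(s)-x(s)| < C\delta$ for all $s \in [t,T]$. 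Since $U_\delta \subseteq U$, admissibility of $\bar u$ is automatic, so no feasibility verification beyond Proposition~\ref{prop:u} is required.

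With $\bar u$, $\bar x$, and $\bar\beta(s) = -f(s,\bar x(s),\bar u(s))$ in hand, I would carry over the three estimates from the proof of Theorem~\ref{main_thm}, now reading $C\delta$ wherever $\epsilon$ appeared. First, Lipschitz continuity of $g$ gives $|g(x(T))-g(\bar x(T))| \le C\delta$. Second, the separable expression~\eqref{c_ham_exp}, together with the Lipschitz continuity of $\x \mapsto S(s,\x)$ and the linearity of $b \mapsto H_f^*(s,b)$, yields the pointwise bound $H^*(s,x(s),\beta(s)) - H^*(s,\bar x(s),\bar\beta(s)) \le C\delta + A(\beta(s)-\bar\beta(s))$, where the first term absorbs both the $S$-difference and the $A\bar x$-shift in the argument of the linear $H_f^*$. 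Third, the remaining term integrates to a boundary contribution: using $\beta-\bar\beta = -\dot x + \dot{\bar x}$ and the matching initial condition $x(t)=\bar x(t)$, one finds $\big|\int_t^T (\beta(s)-\bar\beta(s))\,\de s\big| = |x(T)-\bar x(T)| \le C\delta$. Adding these contributions over the integral and the terminal term yields the desired bound $C\delta$.

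The only point requiring genuine care — and hence the main obstacle — is the handling of the linear term $A(\beta-\bar\beta)$ after integration. Because $H_f^*(s,\cdot)$ is linear, the $\beta$-dependent part of the integrand is itself linear in $\beta(s)-\bar\beta(s)$, and the crucial observation is that integrating a linear functional of $-\dot x + \dot{\bar x}$ collapses to the endpoint gap of the two trajectories, which is controlled by $C\delta$ precisely because they share the same initial state and remain $C\delta$-close throughout. All constants are then consolidated into a single $C$ depending on the Lipschitz moduli of $g$ and $S$, the norm of $A$, and the linear coefficient of $H_f^*$, but \emph{independent of} $\delta$. Once this bookkeeping is settled, the conclusion is immediate, since admissibility and the membership $\bar u(s)\in U_\delta$ are guaranteed from the outset by Proposition~\ref{prop:u}.
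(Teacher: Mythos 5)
Your proposal is correct and matches the paper's intended argument exactly: the authors omit the proof, noting only that the corollary follows from the argument of Theorem~\ref{main_thm} combined with the bound in Proposition~\ref{prop:u}, which is precisely the substitution you carry out. Your write-up is in fact more detailed than the paper's, since you explicitly re-derive the three estimates with $C\delta$ in place of $\epsilon$ and track where each constant comes from.
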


The corollary can be shown using the argument in the proof form Theorem~\ref{main_thm} together with the bound in Proposition~\ref{prop:u}.
Thus, we have omitted the proof.

\subsection{Linearity of $H_f^*(t, b)$ in $b$}\label{sec:lin}
 
 We discuss a class of optimal control problems in which $H^*_f(t,b)$ is linear in $b$ while $f(s,\x,a)$ is nonlinear in $a$. The simplest case is $R(t,a)=0$ so that $L(t,\x,a)$ is independent of the control input. It is straightforward to check that $H^*=0$ in that case. However, it is a too restrictive case. Thus, we aim to find a class of problems with control-dependent cost functions.

Consider a dynamical system of the form
\[
\dot x(t) = (\psi(u(t)),A x(t)) \in \R^n
\]
where $A x(t) \in \R^{n-2}$, $\psi(u(t))\in \R^{2}$, $u(t) \in U \subset \R^{2}$ and $x(t)\in \R^n$. For simplicity, let $U=U_1 \times U_2$ and $U_1=\{q_1,q_2\}$ with $q_i \in \R$ and $U_2$ represents an arbitrary interval in $\R$.
  It is a natural class of control sets as we often consider switched systems where both discrete and continuous control inputs are used. 
  For $a=(a_1,a_2)\in U_1\times U_2$, we further assume 
  \[
  \psi(a)=(\psi_1(a),\psi_2(a))=(f_1(a_1)a_2,f_2(a_1)a_2)
  \]
and $L(s,\x,a)=S(\x)+ca_2$ for some $c \in \R$, where $f_i(\cdot)$'s are functions defined on $U_i$. 
In this setting,  $H_f^*$ is linear in $b$. To see this, let $b_1 := f_1(a_1)a_2$ and $b_2 := f_2(a_1)a_2$. Then, $b_1$ and $b_2$ form a line segment for each $a_1 \in \{q_1,q_2\}$. As a result, we have two line segments represented by $(b_1,b_2)$ that are parametrized over $U=U_1\times U_2$. Since the Lagrangian $L(t,\x, a)=V(\x)+ca_2$ is maximized at the end point of each line segment, $\tilde L(t,\x, b) = S(\x)+ c' \cdot b$ for some $c'\in \R^2$. Therefore, $H_f^*(t, b)$ is linear in $b$.

\section{Numerical Implementation}\label{sec:num}

In this section, we provide a concrete way to numerically solve  optimal control problems using the theoretical results developed in the previous sections. 
Consider the temporal discretization $\{t_k\}_{k=0}^{K}$ with $t=t_0<t_1<...<t_K=T$ and let $\Delta_k:=t_{k+1}-t_{k}$.
As suggested in \cite{lee2021computationally},
the value function $V(\x,0)$ can be approximated by
\begin{equation}\label{disc}
    V(\x, 0) \approx \min_{x[\cdot],\beta[\cdot]} \sum_{k=0}^{K-1} H^*(t_k,x[k],\beta[k])\Delta_k+g(x[K])
\end{equation}
subject to
\begin{align*}
\begin{cases}
    x[k+1]=x[k]-\beta[k]\Delta_k,\\
    \beta [k] \in \Conv(B(x[k],t_k)),\\
    x[k]\in \bar \Omega,\\
    x[0]=\x.
\end{cases}
\end{align*}
This is an optimization problem with respect to $x[\cdot]$ and $\beta [\cdot]$. This problem is a convex optimization problem under Assumption \ref{ass2} and \ref{ass3} when $\Omega$ is convex.\footnote{Although the convexity of the constraint $\beta[k] \in \Conv(B(x[k],t_k)$ is not immediately clear,  it is shown to be a convex set with respect to $(x,\beta)$ (See Lemma 7 in \cite{lee2021computationally})}
 Therefore, the problem can be numerically solved using existing convex optimization algorithms. 
 However, we observe that the resulting control trajectory $u[\cdot]$ often has a chattering issue as demonstrated in \cite[Fig. 3 and 5]{lee2021computationally}. This issue arises from reconstructing the feasible control sequence $u[k] \in B(x[k],t_k)$ using $\beta [k] \in \Conv(B(x[k],t_k))$.
Our goal is to simplify the controller synthesis process without the chattering issue
using the theoretical framework developed in the previous sections.

When solving the optimization problem \eqref{dual} or \eqref{dual_c} in continuous time,  one can  find an admissible control $\tilde u \in \mathcal{U}$ and the corresponding state trajectory $\tilde x$  such that $|x(s)-\tilde x(s)|\approx 0$ using Lemma~\ref{lem:relax}. 
Motivated by this observation, 
 we propose a simple control synthesis scheme in discrete time. 
 Our scheme is summarized as follows: 
 \begin{enumerate}
 \item[(i)] Solve the convex optimization problem \eqref{disc} to obtain $x[k]$ and $\beta[k] \in \Conv(B(x[k],t_k))$;
 
 \item[(ii)] Find $\tilde u[k] \in U$ such that the corresponding state trajectory $\tilde x[k]$ satisfies $|x[k]-\tilde x[k]| < \epsilon$ and $|\beta[k]+f(t_k,\tilde x[k],\tilde u[k])| < \epsilon$, where $\epsilon > 0$ is a user-specified threshold. 
 
 \end{enumerate}
 
 In general,  obtaining $\tilde u[k]$ satisfying the conditions in (ii) is not an easy task. 
As a practical remedy,  we propose an intuitive method that uses a $\delta$-net
 to approximate $\beta[k] \in \Conv(B(x[k],t_k))$ obtained from the optimization problem by $-f(t_k,x[k],\bar u[k])$, where $\bar u[k] \in U_\delta$. Specifically, having found $\beta[k]$ and $x[k]$, we choose $\bar u[k] \in U_\delta$ so that $-f(t_k,x[k],\bar u[k])$ becomes the closest point to $\beta[k]$.
 Using this idea, Step (ii) can be modified as
  \begin{enumerate}
   \item[(ii$'$)] Approximate $\tilde u[k]$ as
 \[
 \tilde u [k] \approx \bar u[k] \in \argmin_{a \in U_\delta}\left |\beta[k]+f(t_k,x[k],a) \right |.
 \] 
  \end{enumerate}

When using the method proposed in \cite{lee2021computationally},
approximated control trajectories fluctuate frequently even though 
the trajectory of $\beta$ has no chattering issue. 
This is because the algorithm in \cite{lee2021computationally}
uses interpolation at every time step. 
Instead, our method   chooses the nearest control input to obtain an admissible control that is less likely to display fluctuations as long as $\beta(t) \in \Conv(B(x(t), t))$ is not oscillating wildly.

\begin{figure}[t]
     \centering
     \begin{subfigure}[b]{0.4\linewidth}
         \centering
         \includegraphics[width=\linewidth]{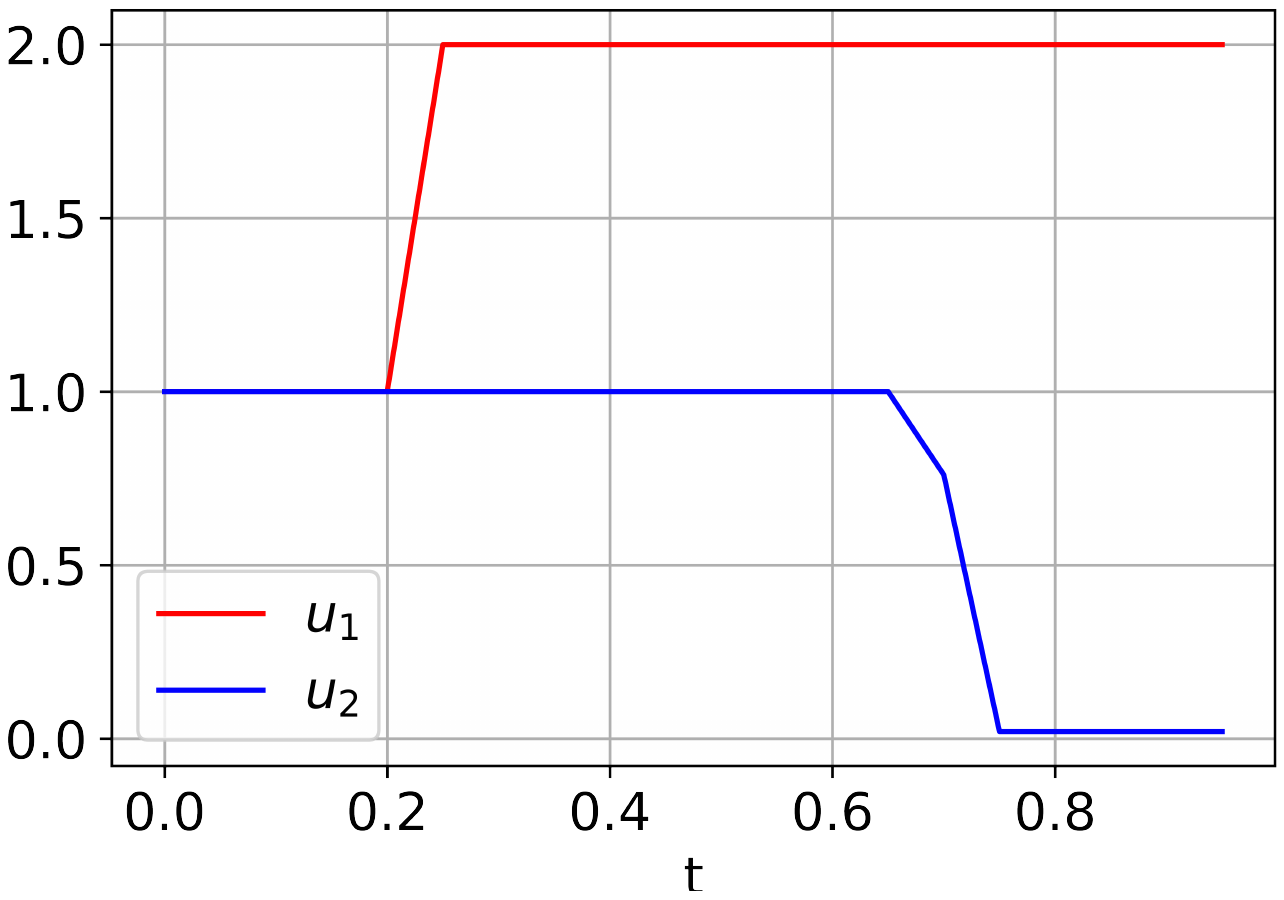}
         \caption{Lagrangian (ours)}
         \label{fig:state_u_1}
     \end{subfigure}%
     \hspace{0.5in}
     \begin{subfigure}[b]{0.4\linewidth}
         \centering
         \includegraphics[width=\linewidth]{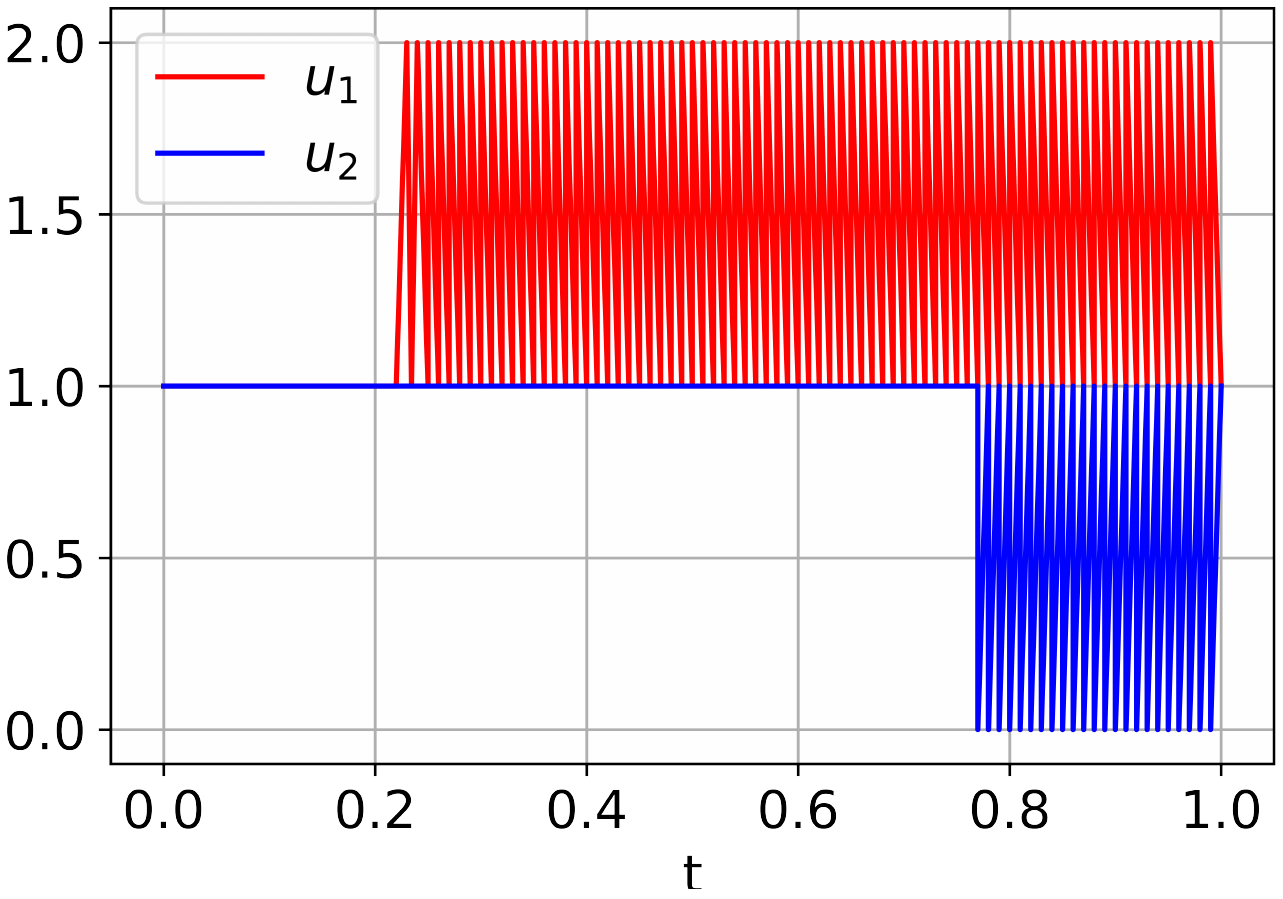}
         \caption{Method in  \cite{lee2021computationally}}
         \label{fig:state_u_2}
     \end{subfigure}
    \caption{Control trajectories obtained by (a) our method and (b)  the method in \cite{lee2021computationally}.}
    \label{fig:control}
\end{figure}

 \begin{figure}[t]
\centering
     \begin{subfigure}[b]{0.4\linewidth}
         \centering
         \includegraphics[height=1.75in]{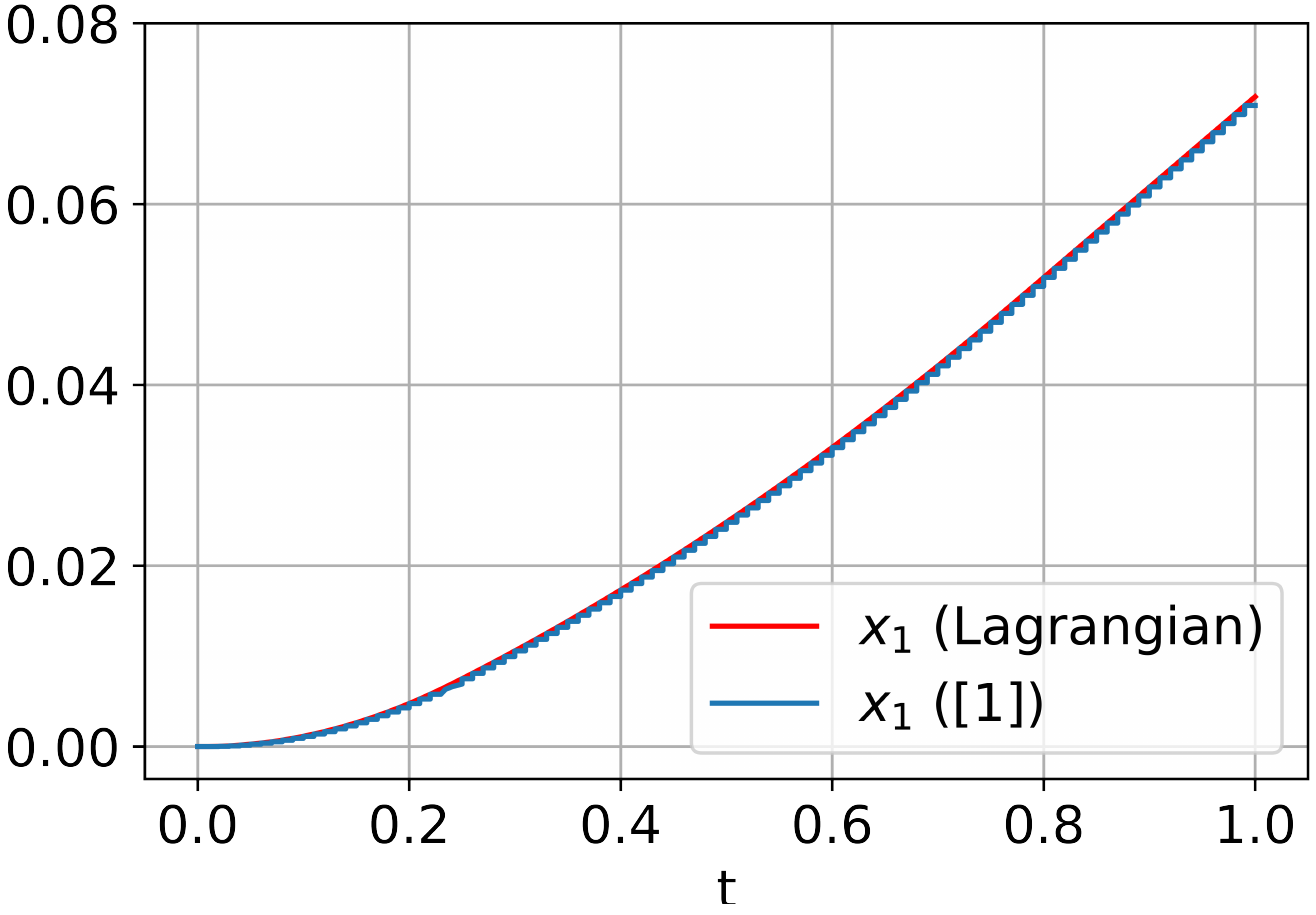}
         \caption{$x_1$}
     \end{subfigure}%
          \hspace{0.3in}
     \begin{subfigure}[b]{0.4\linewidth}
         \centering
         \includegraphics[height=1.75in]{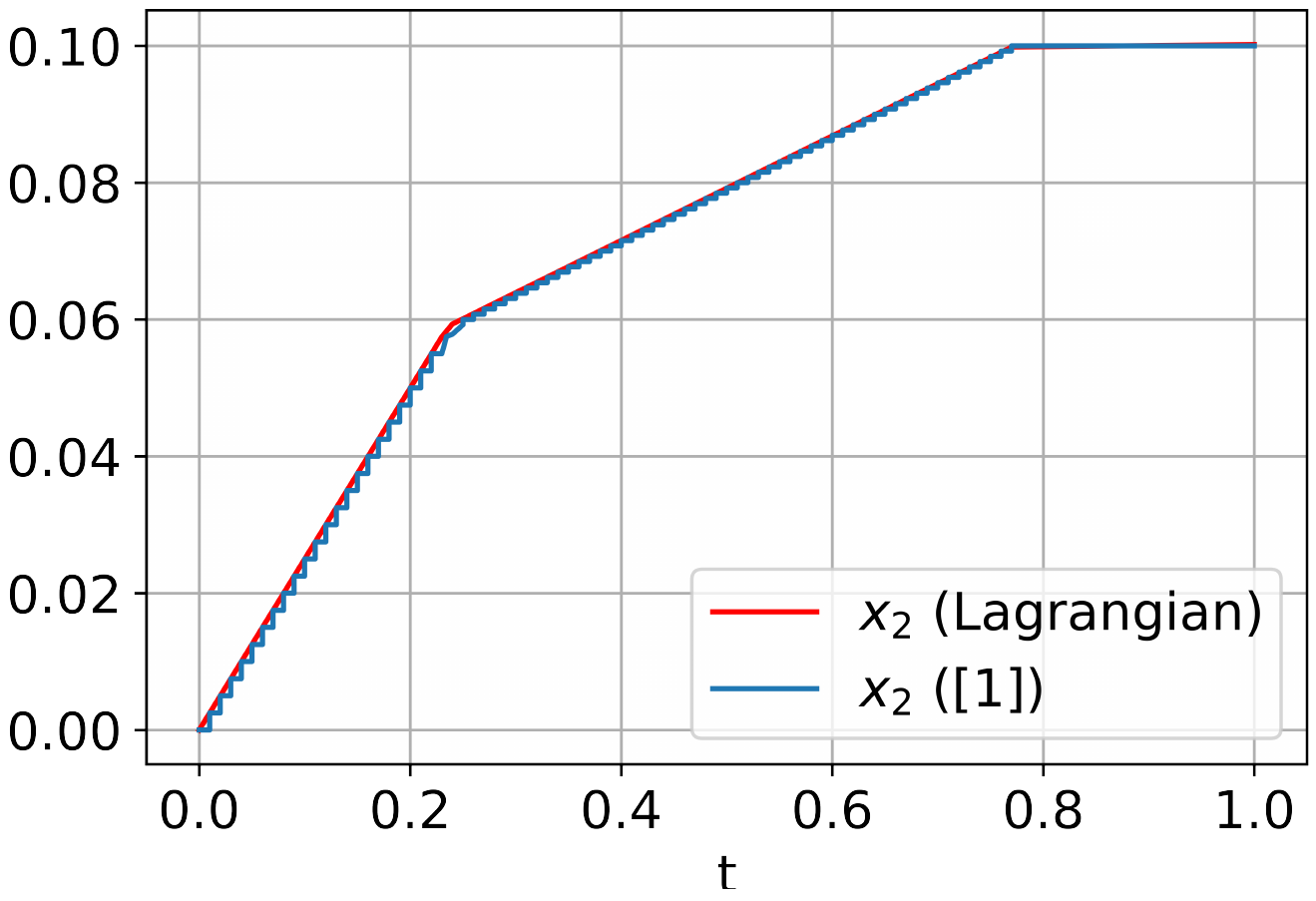}%
         \caption{$x_2$}
     \end{subfigure}\\
     \vspace{0.2in}
     \begin{subfigure}[b]{0.4\linewidth}
         \centering
         \includegraphics[height=1.75in]{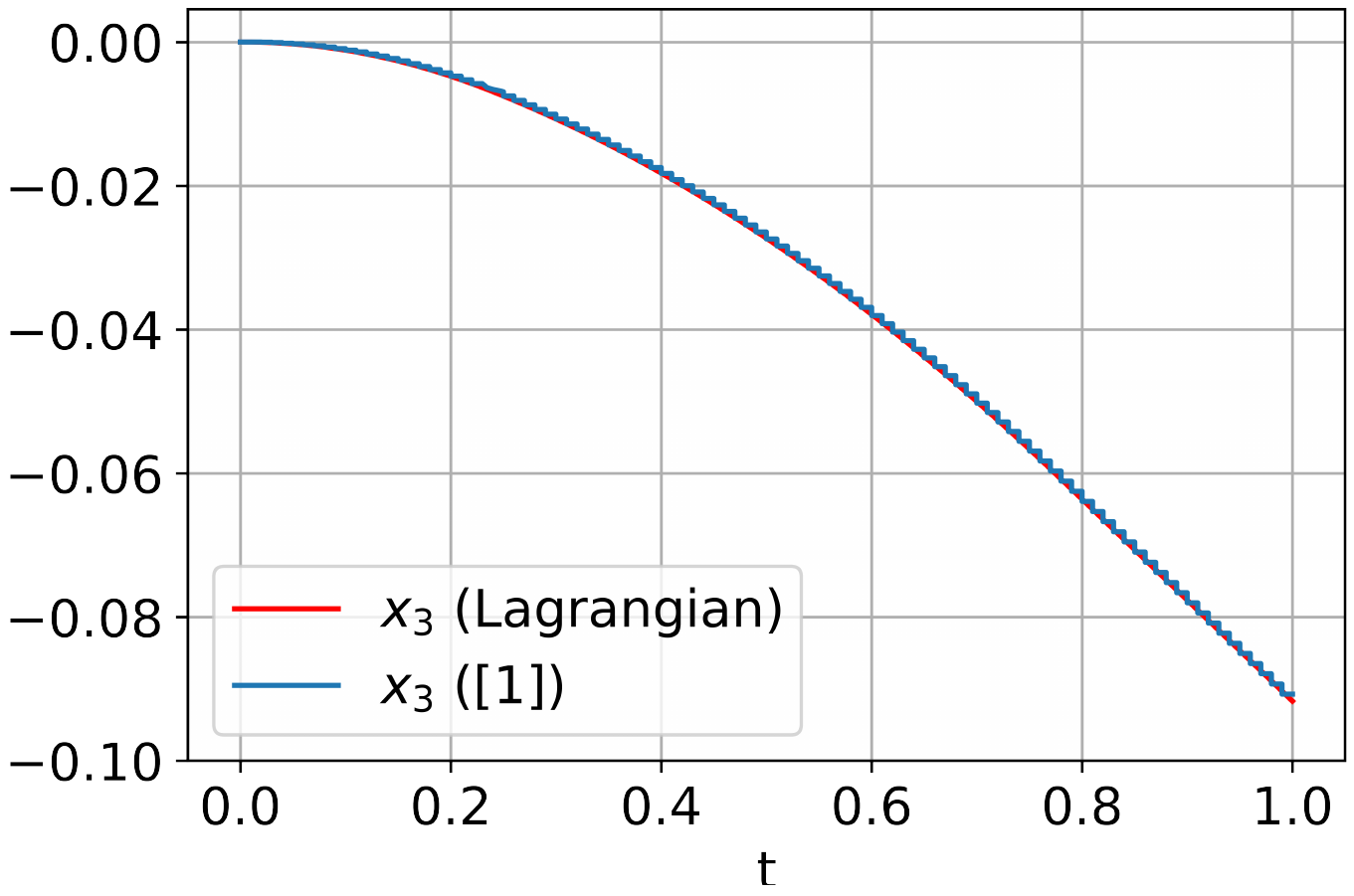}
         \caption{$x_3$}
     \end{subfigure}%
          \hspace{0.3in}
     \begin{subfigure}[b]{0.4\linewidth}
         \centering
         \includegraphics[height=1.75in]{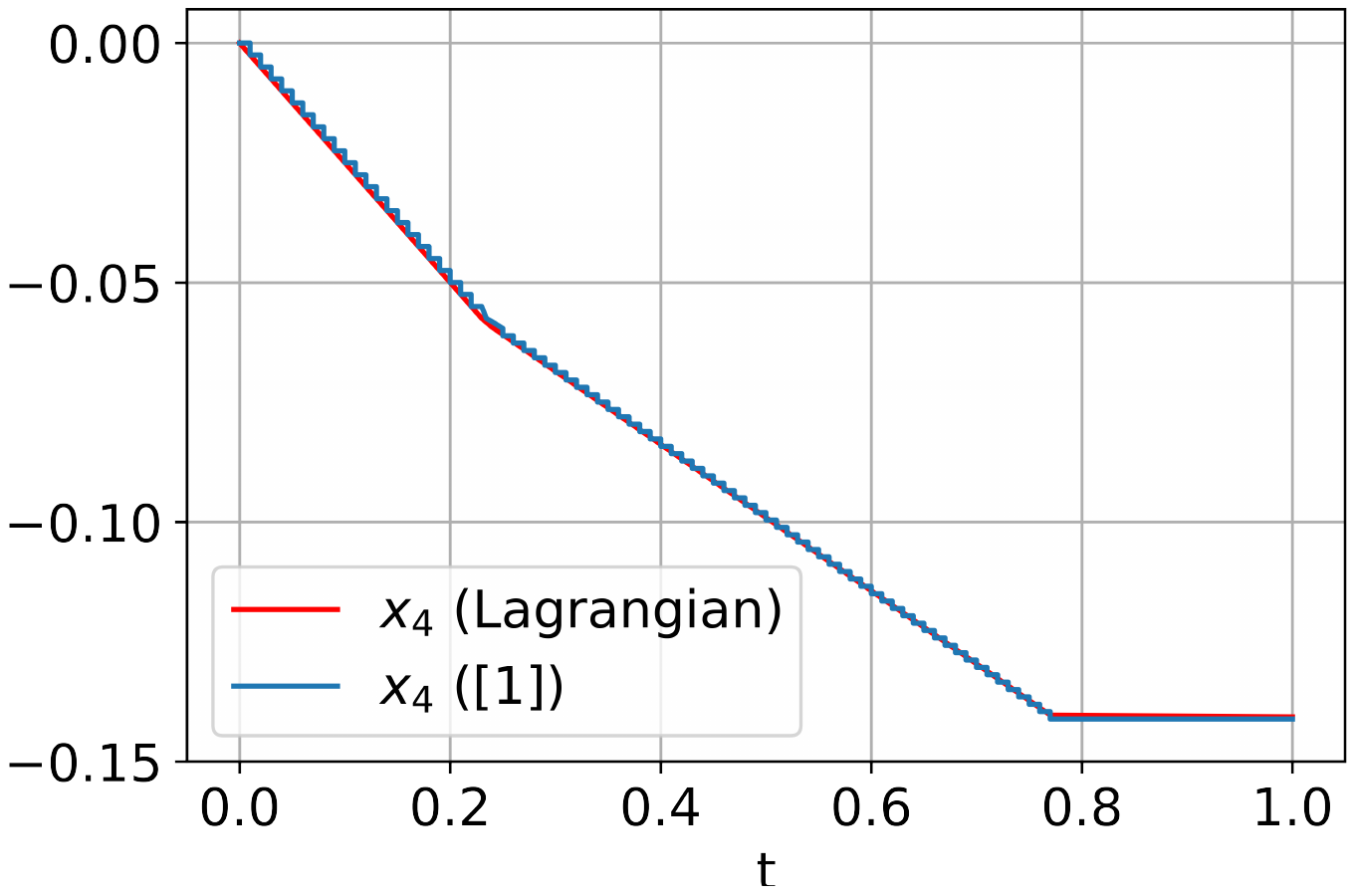}
         \caption{$x_4$}
     \end{subfigure}
        \caption{Controlled state trajectories obtained by our scheme and the method in \cite{lee2021computationally}.}
       \label{fig:state}
\end{figure}

To demonstrate the performance of our controller synthesis method, we consider an example with system dynamics
\begin{align*}
f(s,x(t),u(t))=\left(x_2(t),\frac{1}{1+3u_1(t)^2}u_2(t),x_4(t),-\frac{u_1(t)}{1+3u_1^2(t)}u_2(t)\right),
\end{align*}
where $x(t)=(x_1(t),x_2(t),x_3(t),x_4(t)) \in \R^4$ and $u(t)=(u_1(t),u_2(t))\in \{1,2\} \times [0,1]$.
This system models the dynamics of a gear system~\cite{lee2021computationally}. 
The cost function is selected as
\begin{equation*}
\int_0^1 u_2(s) \de s+1000x_3(1),
\end{equation*}
and the following state constraint is imposed:
\[
|x_2 | \leq 0.1.
\]
Although the vector field is nonlinear in $u(t)$, the convex conjugate of the Hamiltonian is linear in $b$ since $H^* (t, \x, b) = 5b_2 + 9b_4$.

A $\delta$-net of $U$ is constructed as the set of 100 points that are obtained uniformly discretizing $U$. The initial state $x(0) = \bm{x}$ is chosen as the origin. All experiments were performed using a PC  with 3.3GHz 14-core i9 CPU and 64 GB RAM. CVXPY on Python 3.8.8 was used to numerically solve
the convex optimization problems.

   We compare the results of our scheme and the method proposed in \cite{lee2021computationally}.
   Figure~\ref{fig:control} shows the control trajectories obtained by the two methods with $\Delta_k = 0.01$. 
   It is remarkable that our scheme significantly reduces the oscillations in the trajectories generated by the method in \cite{lee2021computationally}.
   Despite the difference in the control trajectories, 
   the corresponding trajectories are  similar to each other      as shown in Figure~\ref{fig:state}.
   Note also that the state constraint is satisfied for all time. 
       The resulting total costs are compared in Table~\ref{tab:perf}, showing that the controller constructed by our scheme slightly outperforms that obtained by the method in \cite{lee2021computationally}.
  In terms of computation time, the two methods are comparable, taking less than 1 second whenever $\Delta_k \geq 0.01$.

\begin{table*} 
	\caption{Quantitative comparisons of the two methods in terms of the total cost and the computation time (in seconds).}
	\begin{center}
			\begin{tabular}{| l | >{\centering}p{1.5cm} |
			| >{\centering}p{3cm} |>{\centering\arraybackslash}p{3cm} |}
				\hline
  & Stepsize $\Delta_k$ & {\textbf{Lagrangian (ours)}}  & \textbf{Method in \cite{lee2021computationally}} \\\hline\hline
				  \multirow{3}{*}{Total cost}  & $0.05$      & $-88.5358$ & $-81.5702$\\ 			  
& $0.02$ &  $-90.7164$ & $-87.8400$	 \\	
& $0.01$ & $-90.7164$ & $-89.9517$	 \\			  \hline
			 \multirow{3}{*}{Comp. time (sec)}   &$0.05$  & $0.1962$ & $0.1952$ \\		
&$0.02$  & $0.4509$ & $0.4326$  \\
&$0.01$  & $0.8608$ & $0.8500$ \\		 \hline
\end{tabular}
		\label{tab:perf}
	\end{center}
\end{table*}

\section{Conclusions}

We have shown that the generalized Lax formula is essentially equivalent to the Lagrangian formula from the theory of HJ equations
 when taking the convex conjugate of the Hamiltonian as the Lagrangian. 
Our analysis provides a new way to understand and implement the representation formula. 
Specifically, 
our theoretical results allow us to propose a rigorous process to construct an admissible control with a performance guarantee as well as a computationally tractable controller synthesis scheme.
The performance of our scheme has been demonstrated through a state-constrained nonconvex optimal control example, showing that it resolves the control chattering issue observed in the previous work. 

This work is a stepping stone to more extensive  results. 
Specifically, it is worth relaxing the structural assumptions used in constructing admissible controls to solve a larger class of problems. 
Moreover, numerical schemes for controller synthesis may be refined with a rigorous convergence analysis bridging the gap between the continuous-time controller and its discrete-time approximation.

\section*{Acknowledgement}
The authors are grateful to Prof. Hung Vinh Tran at Univ. of Wisconsin-Madison and Dr. Donggun Lee at UC Berkeley for insightful discussions.

%

\bibliographystyle{IEEEtran}

\bibliography{ref}

\end{document}